\newtheorem{theorem}{Theorem}
\newtheorem{lemma}[theorem]{Lemma}
\newtheorem{proposition}[theorem]{Proposition}
\newtheorem{corollary}[theorem]{Corollary}
\newtheorem{assumption}[theorem]{Assumption}
\newtheorem{remark}[theorem]{Remark}
\newcommand{\R}{\mathbb{R}}
\newcommand{\ep}{\varepsilon}
\newcommand{\A}{\mathcal{A}}
\newcommand{\X}{\mathcal{X}}
\newcommand{\Y}{\mathcal{Y}}
\newcommand{\KK}{\mathbf{K}}
\newcommand{\ZZ}{\mathcal{Z}}
\newcommand{\Co}{\mathcal{C}}
\begin{document}

\title[A reaction-diffusion model of plankton communities]{Well-posedness, positivity and time asymptotics properties for a reaction-diffusion
model of plankton communities, involving a rational nonlinearity with singularity}

\author{Antoine Perasso$^{1}$, Quentin Richard$^{2}$, Irene Azzali$^{3}$ and Ezio Venturino$^{4}$}


\address{$^{1}$ UMR 6249 Chrono-Environnement, Universit\'e Bourgogne Franche-Comt\'e, 25000 Besan\c con, France. \\
$^{2}$ MIVEGEC, IRD, UMR CNRS 5290, Universit\'e de Montpellier, 34394 Montpellier, France. \\
$^{3}$ Department of Veterinary Sciences largo Paolo Braccini 2, Universit\`a di Torino, 10095 Grugliasco, Italy. \\
$^{4}$ Dipartimento di Matematica ``Giuseppe Peano'', Universit\`a di Torino, 10123 Torino, Italy, Member of the INdAM research group GNCS.}
\ead{quentin.richard@math.cnrs.fr (corresponding author)}
\vspace{10pt}
\begin{indented}
\item[]April 2020
\end{indented}

\begin{abstract}
In this work, we consider a reaction-diffusion system, modeling the interaction between nutrients, phytoplanktons and zooplanktons. Using a semigroup approach in $L^2$, we prove global existence, uniqueness and positivity of the solutions. The Holling type $2$ nonlinearities, i.e of rational type with singularity, are handled by providing estimates in $L^\infty$. The article finally exhibits some time asymptotics properties of the solutions. 
\vspace{2pc}

\noindent{Keywords}: Reaction-diffusion models, positivity, well-posedness, plankton modeling, predator-prey models.
\end{abstract}
\ams{35A01, 35B09, 35K57, 47D06, 92D25.}


\submitto{Studies in Applied Mathematics}

%
%
%
%

\section{Introduction}

Red, or brown, tides are outbreaks of algae in the oceans, quite often harmful, that threaten aquatic life
and constitute a serious problem for the fishing industry and tourism. Models for plankton dynamics have
been devised since more than two decades, because the phytoplankton-zooplankton trophic interactions are at
the base of the food chain on our planet, \cite{edbrid} and disturbances to this basic ecosystem such as those
mentioned above may have
serious consequences that are far beyond the nutrition chain and may involve the worldwide oxygen production, see
\cite{Yadigar} and the references quoted there.
This is mainly the consequence of the unregulated human activities, \cite{and},
\textit{e.g.} utilization of chemical pesticides in agriculture
and the release of untreated wastewaters, \cite{D2014},
that ultimately flow into the shallow waters near the coastlines and
thus contribute to the raise in the organic nutrients concentration in the ocean, \cite{fri}.
The harmful algal blooms
deplete the water from its oxygen content and thereby threaten the life of aquatic creatures.
For these reasons it is important to be able to predict them and mathematical models are fundamental tool to achieve that goal, \cite{Franks-1997,SH1,SH2}.
Most of these models have been formulated by explicitly avoiding space, assuming that
the ocean environment properties are independent of time or position in space, \cite{ABM1}. 
But this is unrealistic as hydrodynamics plays an important role in the shaping of an aquatic
community, as well as factors as temperature, salinity, turbulent mixing intensity.
A consequence is the fact that spatial structures become possible in this context, both 
induced by the heterogeneities in the aquatic medium and by the trophic interactions, \cite{S1,S2}.
Thus multi-habitat and multi-patch formation is possible, \cite{ABM2}.



Large amplitude oscillations of plankton populations are predicted by theoretical analyses,
when nutrients abound in the ocean, \cite{ros,Gilpin,edbrid}, but are not confirmed by empirical data,
\cite{Slaughter,Goericke,Benndorf},
originating thus the paradox of enrichment, \cite{ros,Gilpin}.
The original Rosenzweig model has been modified to improve it, in particular accounting for the 
zooplankton vertical movement, following phytoplankton for feeding, \cite{FMAN}. The latter 
indeed distributes inhomogeneously in view of the diminishing light in the water with depth, due to absorption in the upper layers, \cite{Ray}.
The properties of the combined above mechanisms leading oscillations to settle to a stable coexistence equilibrium have been elucidated
in \cite{mor1,FMAN}. 

Based in part on these results, further explorations have been carried out in \cite{AMV},
including a depth-dependent vertical turbulent diffusion, providing a more realistic scenario. In this paper, we study the following model equations, where $t>0$ is time, $h\in [0,H]$ is the depth and others parameters are given in Table \ref{Table:param} (see \cite{AMV} for details). Moreover $p$, $n$ and $z$ respectively represent the phytoplankton and nutrients densities and the average density of zooplankton:
\begin{eqnarray}\label{sistema1}
\left\{
\begin{array}{rcl}
\frac{\partial n}{\partial t}&=& D\frac{\partial^2 n}{\partial h^2} -L_h(p) \left(\frac{n}{1+\chi n}\right), \\
\frac{\partial p}{\partial t}&=& D\frac{\partial^2 p}{\partial h^2 } + L_h(p) \left(\frac{n}{1+\chi n}\right)-zg(p)-m_pp, \\
\frac{dz}{dt}&=&\frac{kz(t)}{H} \displaystyle \int_0^H g(p)(t,h) \,dh -mz(t).
\end{array}
\right.
\end{eqnarray}
Due to the functional response $g$, \textit{i.e.} the predator ingestion rate of the zooplankton as a function of phytoplankton density, the latter model is a generalization of the one proposed in \cite{AMV}. The operator $L_h$ is given either by
\begin{equation}\label{Lh1}
L_h(p) = r \exp(-\gamma h)p
\end{equation}
or by
\begin{equation}\label{Lh2}
L_h(p) = r\exp\left( -\nu \int_0^{h}p(t,x)\,dx\right) p
\end{equation}
assuming, as the case may be, an exponential decay of light with increasing depth or a light attenuation due to phytoplankton self-shading. The parameters $r, \gamma$ and $\nu$ introduced in the the latter equations are also defined in Table \ref{Table:param}.
\begin{table}\label{Table:param}
\begin{center}
{\footnotesize \begin{tabular}{|c|c|}
\hline 
Notations & Definitions  \\ 
\hline 
D & Vertical turbulent diffusion \\ 
\hline 
H & Depth of water column \\ 
\hline 
$\chi$ & Inverse half-saturation density of nutrient intake \\ 
\hline 
$m, m_p$ & Zooplankton and phytoplankton mortality rates \\ 
\hline 
$k$ & Food utilization coefficient \\ 
\hline 
$r/\chi$ & Maximum phytoplankton growth rate  \\ 
\hline 
$\gamma$ & Light attenuation coefficient \\ 
\hline 
$\nu$ & Self-shading coefficient \\
\hline 
\end{tabular}} \caption{Parameters involved in the model}
\end{center}
\end{table}
Moreover, system (\ref{sistema1}) is equipped with the following boundary conditions:
\begin{eqnarray*}
&\frac{\partial n}{\partial h} (t,0) = 0 , \qquad n(t,H)=n_H, \\
&\frac{\partial p}{\partial h} (t,0) =0, \qquad \frac{\partial p}{\partial h} (t,H) = 0,
\end{eqnarray*}
for every $t>0$, where $n_H\geq 0$ is constant. We also add some initial conditions:
\begin{equation*}
n(0,h) = n_0(h), \qquad p(0,h) = p_0(h), \qquad z(0)=z_0.\end{equation*}

In the following, we will consider the model \ref{sistema1}) and prove its well-posedness. To this end, tools from functional analysis are employed.

In this paper, we want to prove existence and uniqueness of a nonnegative solution for Problem (\ref{sistema1}) for both cases of operator $L_h$ given in (\ref{Lh1})-(\ref{Lh2}), in a $L^2$ framework, and where $g$ is a  positive functional satisfying some Lipschitz property and is positive up to a translation (\textit{i.e.} $(g+\lambda I)$ is positive for some $\lambda$). To achieve that goal, we follow a standard line of proof, sketched next with an outline of the changes and difficulties
encountered.
We rewrite the model as a Cauchy problem, we prove that the linear part generates a positive $C_0$-semigroup, we check that the nonlinear part verifies a Lipschitz property and is positive up to a translation.

These latter points then allow us to use a fixed point theorem to get the desired result. Such kind of mathematical developments have already been published for PDE structured models (\cite{Magal09}, \cite{PerassoRazafison2013}, \cite{PerassoRichard2017}), reaction diffusion systems (\cite{Amann05}, \cite{Apreutesei2010}, \cite{Apreutesei2014}, \cite{DuprezPerasso2017}) and a case mixing diffusion and age-structure \cite{Walker08}. 

In the present model, some new technical difficulties appear, due to the shape of the system. First, there is a nonhomogeneous Dirichlet boundary condition for $n$, so we need to make the change of variables:
\begin{equation}
\tilde{n}=n-n_H \label{Eq:Change_Var}
\end{equation}
in order to get a Cauchy problem. Consequently, in addition to the proof that the linear part generates a positive $C_0$-semigroup, we also need to prove a lower bound property, implying that $\{f\in L^2(0,H): f(x)\geq -n_H \quad a.e. \quad x\in[0,H]\}$ is invariant under the semigroup. Moreover, another critical point in the mathematical analysis stands in a singularity of the nonlinear part at:
$$n=-1/\chi$$
so we need to restrict the space to a subset where the denominator is nonzero. A final difficulty is that the nonlinear part does not satisfy the required Lipschitz property in $L^2$, but does in $L^\infty$. Consequently, we need some $L^\infty$ estimates, that are proved by using the truncation method of Stampacchia (see \textit{e.g.} \cite{Brezis99}).
\smallskip

The paper is structured as follows: in the next section, we make explicit the framework used in the sequel, taking into account the model specificities as previously described. Section \ref{sec:wellpos}, dealing with well-posedness, is dedicated to the main results of the article; we first prove that the linear part generates a $C_0$-semigroup that satisfies some lower and upper bounds; we then handle the nonlinear part showing it satisfies a Lipschitz property and checking that it is positive up to a translation, implying the existence and uniqueness of a nonnegative solution; we then show that the solution is global since it cannot explode in finite time, prove that $n$ is bounded and give a sufficient condition to get extinction for $p$ and $z$. All these results are obtained for the two cases of operator $L_h$ as defined in (\ref{Lh1})-(\ref{Lh2}).

\section{Framework}

For sake of simplicity and without loss of generality, we assume in all that follows that the diffusion coefficient is $D=1$. Recalling (\ref{Eq:Change_Var}), it follows that the model (\ref{sistema1}) becomes
\begin{equation}\label{sistema3}
\left\{
\begin{array}{rcl}
\frac{\partial \tilde{n}}{\partial t}&=&\frac{\partial^2 \tilde{n}}{\partial h^2} - L_h(p) \left(\frac{\tilde{n}+n_H}{1+\chi (\tilde{n}+n_H)}\right), \\
\frac{\partial p}{\partial t}&=&\frac{\partial^2 p}{\partial h^2}+ L_h(p) \left(\frac{\tilde{n}+n_H}{1+\chi (\tilde{n}+n_H)}\right) -z g(p)-m_pp, \\
\frac{dz}{dt}&=&\frac{kz(t)}{H} \displaystyle \int_0^H g(p)(t,h) \,dh -mz(t),
\end{array}
\right.
\end{equation}
for every $t\geq 0$, $h\in [0,H]$, with the boundary conditions:
\begin{eqnarray*}
&\frac{\partial p}{\partial h} (t,0) =0, \qquad  \frac{\partial p}{\partial h} (t,H) = 0, \\
&\frac{\partial\tilde{n}}{\partial h} (t,0) = 0 , \qquad \tilde{n}(t,H)=0.	
\end{eqnarray*}
Since $n=\tilde{n}+n_H$, it suffices to prove that the problem (\ref{sistema3}) is well-posed in a suitable Banach space, in the semigroups setting. We will then drop the tilde in the following and write $n$ instead of $\tilde{n}$, for a better reading. We work in the Hilbert space
$$\X=(L^2(0,H)\times L^2(0,H)\times \R,\|\cdot \|_{\X}),$$
endowed with the norm
$$\|(n,p,z)\|_{\X}=\|n\|_{L^2(0,H)}+\|p\|_{L^2(0,H)}+|z|$$
and the scalar product 
$$\left\langle (n_1,p_1,z_1), (n_2,p_2,z_2)\right \rangle_{\X}=\left\langle
n_1, n_2\right\rangle_{L^2(0,H)}+\left\langle
p_1, p_2\right\rangle_{L^2(0,H)}+z_1z_2.$$
We define the linear operator $\A:D(\A)\subset \X\to \X$ by:
$$\A\left(\begin{array}{cc}
n \\
p \\
z
\end{array}\right)=\left(\begin{array}{cc}
n'' \\
p''-m_pp \\
-m z
\end{array}\right),$$
with domain $D(\A)$ given by
$$\{(n,p,z)\in H^2(0,H)\times H^2(0,H)\times \R: n'(0)=n(H)=p'(0)=p'(H)=0\}.$$
Note here that $(n,p,z)$ belong to $D(\A)\subset \X$ and are time-independent, while it was a function of time (and space) in the model (\ref{sistema1}). The derivatives are consequently taken with respect to $h\in [0,H]$, \textit{e.g.}
$$
n'=\frac {dn}{dh}, \qquad p'=\frac {dp}{dh}.
$$
For sake of simplicity we keep the same notations, though the space will always be specified to avoid some possible confusion.

Since we are interested in the positivity of the solutions, we denote by $\X_+$ the positive cone of $\X$. Actually, because of the change of variable (\ref{Eq:Change_Var}), we have
$$n\geq 0 \Longleftrightarrow \tilde{n}\geq -n_H,$$
where $n$ and $\tilde{n}$ are respectively the solutions of (\ref{sistema1}) and (\ref{sistema3}). To this end we define, for every $\ep\geq 0$, the space
$$\X_{\ep}:=\{(n,p,z)\in \X: (n+\ep \mathbf{1}_{[0,H]},p,z)\in \X_+\}.$$
We see that $\X_0=\X_+$ and the sequence of spaces $\{\X_{\ep}\}_{\ep\geq 0}$ is increasing in the sense that
$$\X_+\subset \X_{\ep_1}\subset \X_{\ep_2}, \quad \forall \ep_2\geq \ep_1\geq 0.$$
We will then obtain the positivity when considering $\ep=n_H$. We now suppose, and in all that follows, that the nonlinear functional $g$ satisfies the assumption below.
\begin{assumption}\label{Assump}
We suppose that
$g:L^\infty_+(0,H)\to L^\infty_+(0,H)$ and there exists $\lambda>0$ such that $\lambda p-g(p)\in L^\infty_+(0,H)$ for every $p\in L^\infty_+(0,H)$, and for every $m>0$ there exists some constant $l_m\geq 0$ such that for every $(p_1,p_2)\in \{p\in L^\infty_+(0,H): \|p\|_{L^\infty(0,H)}\leq m\}^2$, we have
$$\|g(p_1)-g(p_2)\|_{L^\infty(0,H)}\leq l_m\|p_1-p_2\|_{L^\infty(0,H)}$$
\end{assumption}

\begin{remark}
We can note that the classical functional responses Holling types I,II II, Ivlev and the one given in \cite{AMV}:
$$g(p)=p, \quad g(p)=\frac{p}{1+p}, \quad g(p)=\frac{p^2}{1+p^2}, \quad g(p)=(1-e^{-p}), \quad g(p)=\frac{p^2}{(1+p)}$$
satisfy the Assumption \ref{Assump}.
\end{remark}
Since the functional $g$ is defined in $L^\infty(0,H)$, we need to define the Banach space 
$$\X^\infty=(L^\infty(0,H)\times L^\infty(0,H)\times \R,\|\cdot\|_{\X^\infty})\subset \X$$
endowed with the norm
$$\|(n,p,z)\|_{\X^\infty}=\|n\|_{L^\infty(0,H)}+\|p\|_{L^\infty(0,H)}+|z|$$
and we also define $\X^\infty_+$ the positive cone of $\X^\infty$, as well as the spaces
$$\X^\infty_{\ep}:=\{(n,p,z)\in \X^\infty: (n+\ep \mathbf{1}_{[0,H]},p,z)\in \X^\infty_+\}\subset \X^\infty,$$
for every $\ep\geq 0$. Because of the singularity of the nonlinear part in (\ref{sistema3}) at
$$-n_H-\frac{1}{\chi}$$
we define, according to the two cases of operator $L_h$ given in (\ref{Lh1})-(\ref{Lh2}), the functions $f_i:\X^\infty_{n_H+(2\chi)^{-1}}\to \X^\infty$ by:
$$f_{1}(n,p,z)=\left(\begin{array}{cc}
-r \exp(-\gamma \cdot)p \left( \frac{n+n_H}{1+\chi (n+n_H)}\right) \\
r \exp(-\gamma \cdot) p \left(\frac{n+n_H}{1+\chi (n+n_H)}\right) -g(p) \vspace{0.1cm} \\
\frac{kz}{H} \displaystyle \int_0^H g(p)(t,h) \,dh
\end{array}\right),$$
$$f_{2}(n,p,z)=\left(\begin{array}{cc}
-r\exp(-\nu \int_0^{h}p(x)\,dx) p\left(\frac{n+n_H}{1+\chi (n+n_H)}\right) \\
r\exp(-\nu \int_0^{h}p(x)\,dx) p\left(\frac{n+n_H}{1+\chi (n+n_H)}\right) -g(p) \vspace{0.1cm} \\
\frac{kz}{H} \displaystyle \int_0^H g(p)(t,h) \,dh
\end{array}\right)$$
for each $i\in\{1,2\}$.

\begin{lemma}
The range of $f_i$ is included in $\X^\infty$ for each $i\in\{1,2\}$.
\end{lemma}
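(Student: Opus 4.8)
The plan is to verify the three coordinates of $f_i(n,p,z)$ separately, showing that the first two land in $L^\infty(0,H)$ and the third in $\R$, for an arbitrary $(n,p,z)\in\X^\infty_{n_H+(2\chi)^{-1}}$. The whole argument rests on a single preliminary observation about the denominator, so I would establish that first.

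First I would unpack what membership in $\X^\infty_{n_H+(2\chi)^{-1}}$ gives: by definition of $\X^\infty_\ep$ and of the positive cone, it forces $p\geq 0$, $z\geq 0$ and $n\geq -n_H-(2\chi)^{-1}$ almost everywhere. Writing $u:=n+n_H\in L^\infty(0,H)$, this yields $u\geq -(2\chi)^{-1}$, hence $1+\chi u\geq \frac12>0$ almost everywhere. This is precisely why the domain is cut at $(2\chi)^{-1}$ rather than at the singular value $\chi^{-1}$: it keeps the denominator uniformly bounded away from zero. Consequently $\bigl|\frac{u}{1+\chi u}\bigr|\leq 2|u|\leq 2\|n+n_H\|_{L^\infty(0,H)}$, so the rational factor $\frac{n+n_H}{1+\chi(n+n_H)}$ belongs to $L^\infty(0,H)$.

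Next I would handle the light factor. For $f_1$ one has $|\exp(-\gamma\cdot)|\leq 1$ on $[0,H]$ since $\gamma>0$; for $f_2$, using $p\geq 0$ and $\nu>0$ the exponent $-\nu\int_0^h p\,dx$ is nonpositive, so again $|\exp(-\nu\int_0^\cdot p\,dx)|\leq 1$. In both cases the factor lies in $L^\infty(0,H)$. The first two coordinates of $f_i$ are then finite sums and products of $L^\infty$ functions: the product of the light factor, of $p\in L^\infty(0,H)$, and of the rational factor is in $L^\infty(0,H)$, while Assumption \ref{Assump} applied to $p\in L^\infty_+(0,H)$ gives $g(p)\in L^\infty_+(0,H)\subset L^\infty(0,H)$. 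Hence both coordinates lie in $L^\infty(0,H)$.

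Finally, for the third coordinate, since $g(p)\in L^\infty(0,H)$ and $(0,H)$ has finite measure, $g(p)\in L^1(0,H)$, so $\int_0^H g(p)(h)\,dh$ is a finite real number and $\frac{kz}{H}\int_0^H g(p)\,dh\in\R$. Combining the three parts yields $f_i(n,p,z)\in L^\infty(0,H)\times L^\infty(0,H)\times\R=\X^\infty$. I expect no genuine obstacle here; the only delicate point is the first step, where the correct choice of translation parameter $n_H+(2\chi)^{-1}$ is exactly what prevents the singularity of the rational nonlinearity from destroying the $L^\infty$ bound, and everything else is routine once the denominator is controlled.
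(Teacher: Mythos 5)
Your proposal is correct and follows essentially the same route as the paper: a direct coordinate-by-coordinate estimate in which the restriction to $\X^\infty_{n_H+(2\chi)^{-1}}$ forces $1+\chi(n+n_H)\geq \tfrac12$, the light factors are bounded by $1$, and Assumption~\ref{Assump} gives $g(p)\in L^\infty_+(0,H)$ (the paper compresses all of this into the single bound $\|f_1(n,p,z)\|_{\X^\infty}\leq \frac{2r}{\chi}\|p\|_{L^\infty}+\|g(p)\|_{L^\infty}+k|z|\,\|g(p)\|_{L^\infty}$, using the uniform estimate $\bigl|\frac{u}{1+\chi u}\bigr|\leq \frac{1}{\chi}$ where you use $2\|n+n_H\|_{L^\infty}$, an immaterial difference). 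Your write-up is in fact more explicit than the paper's, notably in observing that $p\geq 0$ is needed for $g(p)$ to even be defined.
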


\begin{proof}
Let $(n,p,z)\in \X^\infty_{n_H+(2\chi)^{-1}}$, then
$$\begin{array}{rcl}
\|f_{1}(n,p,z)\|_{\X^\infty} \leq \frac{2r}{\chi}\|p\|_{L^\infty}+\|g_2(p)\|_{L^\infty}+k|z|\|g_2(p)\|_{L^\infty}<\infty.
\end{array}$$
and the same inequality holds for $f_{2}$.
\end{proof}
When focusing on (\ref{sistema3}), we will consequently study thereafter the following abstract Cauchy problems:
\begin{equation}\label{Eq:CauchyPb}
\begin{array}{rcl}
	U'(t)&=&\A U(t) +f_{i}(U(t)), \quad \forall t>0, \quad \textnormal{in }  \X^\infty_{n_H}, \\
	U(0)&=&U_0 \in \X^\infty_{n_H}\subset X^\infty_{n_H+(2\chi)^{-1}},
\end{array}
\end{equation}
for every $i\in\{1,2\}$, where $U(t)=(n(t),p(t),z(t))^T$. The approach used to prove existence and uniqueness of a solution of (\ref{Eq:CauchyPb}) is classical (see \textit{e.g.} \cite{Pazy83}). The techniques used for both models are the same: we first show that $\A$ generates a $C_0$-semigroup in $\X$, then we prove some Lipschitz property for $f_{i}$.
Now that the framework is clear, we can deal with the well-posedness of the Cauchy problem (\ref{Eq:CauchyPb}).

\section{Well-posedness}\label{sec:wellpos}

\subsection{Linear part}

We start this section by handling the linear part.

\begin{theorem}\label{Thm:Generation}
For every $\nu\geq 0$, the operator $\A-\nu I$ generates a $C_0$-semigroup $\{T_{\A-\nu I}(t)\}_{t\geq 0}$ on $\X$. Moreover it satisfies
\begin{equation}\label{Eq:Lin_C1}
\forall u_0 \in \X, \quad t\longmapsto T_{\A-\nu I}(t)u_0 \in \Co([0,\infty),\X)\cap \Co^1((0,\infty),\X),
\end{equation}
\begin{equation}
\label{Eq:Sg_Bound}
\|T_{\A-\nu I}(t)u_0\|_{\X^\infty}\leq \|u_0\|_{\X^\infty}, \quad \forall t\geq 0, \quad \forall u_0\in \X^\infty,
\end{equation}
whence $\{T_{\A-\nu I}(t)\}_{t\geq 0}\subset \mathcal{L}(\X^\infty)$ and
\begin{equation}
\label{Eq:Sg_Pos}T_{\A-\nu I}(t)u_0 \in \X_{\ep}, \quad \forall t\geq 0, \quad \forall \ep\geq 0, \quad \forall u_0 \in \X_\ep.
\end{equation}
Note that (\ref{Eq:Sg_Pos}) implies the positivity of $\{T_{\A-\nu I}(t)\}_{t\geq 0}$.
\end{theorem}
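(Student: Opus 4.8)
The plan is to exploit the fact that $\A$ is block-diagonal with respect to the decomposition underlying $\X=L^2(0,H)\times L^2(0,H)\times\R$: it acts as $A_1 n:=n''$ on the first component (with domain encoding $n'(0)=n(H)=0$), as $A_2 p:=p''-m_p p$ on the second (with $p'(0)=p'(H)=0$), and as multiplication by $-m$ on the scalar component. Since the three blocks are independent, the semigroup generated by $\A$, hence by $\A-\nu I$, is the diagonal product of the three component semigroups, and it suffices to treat each block separately. The scalar part generates $t\mapsto e^{-(m+\nu)t}$ trivially. For $A_1$ and $A_2$ I would integrate by parts, using the respective boundary conditions to kill the boundary terms, to get $\langle A_i u,u\rangle=-\|u'\|_{L^2}^2-\delta_{i2}m_p\|u\|_{L^2}^2\leq 0$; thus each $A_i$ is self-adjoint and $\leq 0$, hence m-dissipative and sectorial, so by the standard generation theorem $A_i-\nu I$ generates an analytic $C_0$-semigroup of contractions on $L^2(0,H)$. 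Assembling the three blocks gives that $\A-\nu I$ generates a $C_0$-semigroup on $\X$.

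The smoothing estimate (\ref{Eq:Lin_C1}) then comes for free from analyticity: strong continuity yields the $\Co([0,\infty),\X)$ membership at $t=0$, while for an analytic semigroup $T(t)u_0\in D((\A-\nu I)^\infty)$ for every $t>0$ and every $u_0\in\X$, so $t\mapsto T_{\A-\nu I}(t)u_0$ is real-analytic, in particular $\Co^1$, on $(0,\infty)$.

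For the $L^\infty$ contraction (\ref{Eq:Sg_Bound}) and the invariance (\ref{Eq:Sg_Pos}) I would use the Stampacchia truncation method componentwise. By the smoothing property, $u(t):=T_{\A-\nu I}(t)u_0$ lies in $D(\A)\subset H^2$ and is $\Co^1$ in $t$ for $t>0$, so the energy computations below are justified for regular initial data, the general case following by density. For the invariance of $\X_\ep$, the scalar part preserves $z\geq 0$ since $z(t)=e^{-(m+\nu)t}z_0$, and $p(t)\geq 0$ follows by testing the $p$-equation against $-p^-$: the diffusion term gives $-\int_{\{p<0\}}|p'|^2\leq 0$, the boundary terms vanishing by the Neumann conditions, and the reaction term contributes $-(m_p+\nu)\|p^-\|_{L^2}^2\leq 0$, whence $\frac{d}{dt}\|p^-\|_{L^2}^2\leq 0$ and $p^-\equiv 0$. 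The crucial point is the lower bound for $n$: setting $v:=n+\ep$ turns the first equation into $\partial_t v=v''-\nu v+\nu\ep$ with $v'(0)=0$ and the inhomogeneous boundary value $v(H)=\ep\geq 0$; testing against $-v^-$ and integrating by parts, the boundary term at $H$ vanishes because $v^-(H)=\ep^-=0$, the one at $0$ because $v'(0)=0$, and on $\{v<0\}$ the reaction term satisfies $(\nu v-\nu\ep)v^-=-\nu(v^-)^2-\nu\ep v^-\leq 0$, giving $\frac{d}{dt}\|v^-\|_{L^2}^2\leq 0$ and hence $v\geq 0$, i.e. $n\geq-\ep$. The uniform bound (\ref{Eq:Sg_Bound}) is obtained by the same truncations at the levels $\pm\|u_0\|_{L^\infty}$: the upper bound for $n$ uses $(n-M)^+$ together with $n(H)=0\leq M$, its lower bound is the case $\ep=M$ above, the $p$-component uses both truncations $(p\mp M)^{\pm}$, and the scalar component is immediate.

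The main obstacle I anticipate is precisely this lower-bound property for $n$, since the nonhomogeneous Dirichlet condition $n(H)=0$ destroys the naive guess that the constant $-\ep$ is a sub-solution; the shift $v=n+\ep$ is what restores the correct sign of the boundary contribution, and one must verify that the extra zeroth-order terms $-\nu v+\nu\ep$ produced by the shift keep the favourable sign on $\{v<0\}$. A secondary technical point is the passage from regular to general data: the truncation identities are first established for $u_0\in D(\A)$, and then extended to all of $\X^\infty$, respectively to all of $\X_\ep$, using the $L^2$-continuity of the semigroup together with the lower semicontinuity of the $L^\infty$-norm and the closedness of $\X_\ep$ under $L^2$-limits.
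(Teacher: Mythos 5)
Your overall strategy runs parallel to the paper's: dissipativity of the decoupled blocks, plus Stampacchia-type truncation both for the $L^\infty$ contraction (\ref{Eq:Sg_Bound}) and for the invariance of $\X_\ep$. Your shifted truncation $v=n+\ep$ is correct and is essentially equivalent to the paper's bound (\ref{Eq:Maj_n}), which yields $n(t,\cdot)\geq \min\{0,\inf_h n_0(h)\}\geq -\ep$ directly without any shift; note in passing that the situation is less delicate than you fear, since the constant $-\ep$ \emph{is} a subsolution of $\partial_t n=n''-\nu n$ (indeed $(-\ep)''-\nu(-\ep)=\nu\ep\geq 0$) compatible with $n(H)=0\geq-\ep$. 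Two genuine divergences are worth recording. First, for the regularity (\ref{Eq:Lin_C1}) you invoke analyticity, while the paper deduces it from self-adjointness via the smoothing theorem for self-adjoint semigroups (Brezis, Thm.\ VII.7); the outcomes are equivalent, and analyticity has the side benefit that $T_{\A-\nu I}(t)u_0\in D(\A)$ for all $t>0$ and all $u_0\in\X$, which, as you half-notice, makes your density step dispensable for the linear problem. Second, for positivity the paper argues through the resolvent, showing $((\lambda+\nu)I-\A)^{-1}\geq 0$ by an elliptic Hopf maximum-principle argument, whereas you obtain positivity parabolically as the case $\ep=0$ of the $\X_\ep$-invariance; your route is more economical, the paper's yields resolvent positivity as a statement of independent interest.

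The one genuine gap is in the generation step. From the integration by parts you only obtain that each block $A_i$ is symmetric and dissipative; the inference ``thus each $A_i$ is self-adjoint and $\leq 0$, hence m-dissipative'' runs backwards and is false in general: a symmetric dissipative operator need not be self-adjoint, because maximality can fail (the Laplacian restricted to $\Co^\infty_c$ is symmetric and dissipative but not self-adjoint). What must actually be verified is the range condition, i.e.\ surjectivity of $\lambda I-\A$ for $\lambda>0$, and this is precisely the nontrivial half of the paper's first step: a Lax--Milgram argument in $V=\{u\in H^1(0,H):u(H)=0\}$, respectively in $H^1(0,H)$, followed by the regularity bootstrap showing the weak solution lies in $H^2(0,H)$ and satisfies the boundary conditions $n'(0)=n(H)=0$ and $p'(0)=p'(H)=0$, so that it belongs to $D(\A)$. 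Only once m-dissipativity is in hand does symmetry upgrade to self-adjointness (the paper cites Brezis, Prop.\ VII.6) and Lumer--Phillips apply. Your argument is repaired either by inserting this Lax--Milgram step or by invoking the Friedrichs/form construction for the two Sturm--Liouville blocks; as written, the maximality --- the only substantive analytic content of the generation claim --- is asserted rather than proved. The remainder of your proposal (the signs of the boundary and reaction terms in all four truncation computations, and the lower-semicontinuity passage from regular to general data) is sound.
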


\begin{proof}
The sketch of the proof is the following: we first prove that $\A-\nu I$ generates a $C_0$-semigroup by verifying the surjectivity and the dissipativity properties. We deduce that for every initial condition $(n_0,p_0,z_0)\in \X$, the solution of the linear problem verifies (\ref{Eq:Lin_C1}). We then show that this solution (denoted by $(n,p,z)$) verifies the following inequalities:
\begin{equation} \label{Eq:Maj_n}
\min\{0,\inf_{h\in[0,H]}n_0(h)\} \leq n(t,h)\leq \max\{0,\sup_{h\in[0,H]}n_0(h)\},
\end{equation}
\begin{equation}\label{Eq:Maj_p}
\min\{0,\inf_{h\in[0,H]}p_0(h)\}\leq p(t,h) \leq \max\{0,\sup_{h\in[0,H]}p_0(h)\},
\end{equation}
\begin{equation}\label{Eq:Maj_z}
-|z_0|\leq z(t)\leq |z_0|,
\end{equation}
for every $t\geq 0$, a.e. $h\in [0,H]$ and (\ref{Eq:Sg_Bound}) follows. We then check that $\{T_{\A-\nu I}(t)\}_{t\geq 0}$ is positive. Finally we prove (\ref{Eq:Sg_Pos}).
\begin{enumerate}
\item Clearly, $D(\A)$ is dense into $\X$. Moreover, for every $(n,p,z)\in D(\A)$, we have
\begin{eqnarray*}
\hspace{-1.5cm}
\begin{array}{ll}
&\left\langle \A(n,p,z), (n,p,z)\right\rangle_{\X}\\
&=\left\langle Dn'',n\right\rangle_{L^2}+\left\langle Dp''-m_p p,p\right\rangle_{L^2}-mz^2 \\
&=D\displaystyle \int_0^H n(h)\frac{\partial^2 n}{\partial h^2}dh+D\int_0^H p(h)\frac{\partial^2 p}{\partial h^2}dh-m_p \int_0^H p(h)^2 dh-mz^2 \\
&=-D\displaystyle \int_0^H \left(\frac{\partial n}{\partial h}\right)^2dh-D\int_0^H \left(\frac{\partial p}{\partial h}\right)^2dh-m_p \int_0^H p(h)^2 dh-mz^2 \\
&\leq 0.
\end{array}
\end{eqnarray*}
Consequently, $\A$ is dissipative in $\X$. Let us show now that $\lambda I-\A:D(\A)\to \X$ is surjective for any $\lambda>0$. Let $\overline{H}=(h_n,h_p,h_z)\in \X$ and $\lambda>0$. We look for $U:=(n,p,z)^T \in D(\A)$ such that $(\lambda I-\A)U=\overline{H}$, \textit{i.e.} 
\begin{eqnarray}
 \lambda n-n''=h_n, \label{Eq:Surj1} \\
 \lambda p -p''+m_p p=h_p, \label{Eq:Surj2} \\
 \lambda z+m z=h_z, \nonumber
\end{eqnarray}
so
$$z=\frac{h_z}{\lambda+m}.$$
We multiply (\ref{Eq:Surj1}) and (\ref{Eq:Surj2}) respectively by $u\in H^1(0,H)$ and $v\in H^1(0,H)$, then integrate between $0$ and $H$ to get
\begin{eqnarray*}
\left\{
\begin{array}{rcl}
\displaystyle \lambda \int_0^H nu-\int_0^H n''u&=&\displaystyle \int_0^H h_n u,\\
\displaystyle \lambda \int_0^H pv-\int_0^H p''v+m_p \int_0^H pv&=&\displaystyle \int_0^H h_p v.
\end{array}
\right.
\end{eqnarray*}
An integration by parts gives
\begin{eqnarray}
 \lambda \int_0^H nu+\int_0^H n'u'=\int_0^H h_n u, \label{Eq:Lax1} \\
 \lambda \int_0^H pv+\int_0^H p'v'+m_p \int_0^H pv=\int_0^H h_p v, \label{Eq:Lax2}
\end{eqnarray}
whence
$$a_1(n,u)=L_1(u), \quad a_2(p,v)=L_2(v),$$
where the bilinear forms $a_1:V\times V\to \R$, $a_2:H^1(0,H)\times H^1(0,H)\to \R$ and the linear forms $L_1:V\to \R$, $L_2:H^1(0,H)\to \R$ are defined by: 
$$a_1(n,u)=\lambda \int_0^H nu+\int_0^H n'u',$$
$$a_2(p,v)=\lambda \int_0^H pv+\int_0^H p'v'+m_p \int_0^H pv,$$
$$L_1(u)=\int_0^H h_n u, \qquad L_2(v)=\int_0^H h_p v,$$
where 
$$V:=\{u\in H^1(0,H): u(H)=0\}.$$
A simple application of Lax-Milgram theorem implies that for every $(h_n,h_p) \in  (L^2(0,H))^2$, there exists a unique $(n,p)\in V\times H^1(0,H)$ such that:
\begin{eqnarray*}
\left\{
\begin{array}{rcl}
a_1(n,u)&=&L_1(u), \\
a_2(p,v)&=&L_2(v),
\end{array}
\right.
\end{eqnarray*}
for every $(u,v)\in V\times H^1(0,H)$.

Now, we verify that $U$ belongs to $D(\A)$. For this, we use (\ref{Eq:Lax1}) and (\ref{Eq:Lax2}) with $u\in \Co ^\infty_c([0,H])$ and $v\in \Co ^\infty_c([0,H])$ respectively, where $\Co _c^\infty(0,H)$ refers to $\Co^\infty$ functions with compact support. Then, we get
$$\left| \int_0^H n'u' \right|\leq [|\lambda|\|n\|_{L^2(0,H)}+\|h_n\|_{L^2(0,H)}]\|u\|_{L^2(0,H)}\leq c_1 \|u\|_{L^2},$$
$$\left| \int_0^H p'v' \right|\leq [(|\lambda|+|m_p|)\|p\|_{L^2(0,H)}+\|h_p\|_{L^2(0,H)}]\|v\|_{L^2(0,H)}\leq c_2 \|v\|_{L^2},$$
for some constant $c_1$ and $c_2$. Consequently $n'\in H^1(0,H)$ and $p'\in H^1(0,H)$, so $n\in H^2(0,H)$ and $p\in H^2(0,H)$. Finally, to prove the surjectivity, an integration by parts of (\ref{Eq:Lax1})-(\ref{Eq:Lax2}) with $u\in \Co _c(0,H)$ and $v\in \Co _c(0,H)$ implies (\ref{Eq:Surj1}) and (\ref{Eq:Surj2}). Moreover, an integration by parts of (\ref{Eq:Lax1}) with $u\in \Co (0,H)$, $u(0)=1$, $u(H)=1$ implies that $n'(0)=0$. Similarly, we get $p'(0)=0$ and $p'(H)=0$ after an integration by parts of (\ref{Eq:Lax2}) with $v\in \Co (0,H)$ and respectively $v(0)=1, v(H)=0$ and $v(0)=0, v(H)=1$. Thus $\A$ generates a $C_0$-semigroup $\{T_{\A}(t)\}_{t\geq 0}$ by Lumer-Phillips theorem, and $\A-\nu I$ also generates a $C_0$-semigroup $\{T_{\A-\nu I}(t)\}_{t\geq 0}$ for every $\nu\geq 0$ by bounded perturbation arguments.

\item Let $\nu\geq 0$. We readily see that $\A-\nu I$ is a symmetric operator. It is actually a self-adjoint operator since it is $m$-dissipative (with \cite{Brezis99}, Proposition VII.6, p. 113). Using \cite{Brezis99}, Theorem VII.7, p. 113, we obtain that the solution of
\begin{equation}
\left\{
\begin{array}{rcl}
U'(t)&=&(\A-\nu I)U(t) \\
U(0)&=&u_0 \in \X
\end{array}
\right.
\end{equation}
verifies (\ref{Eq:Lin_C1}).

\item Let $\nu\geq 0$. We want to prove that the solution $U(t):=(n(t,\cdot), p(t,\cdot), z(t))$ of
\begin{equation}\label{Eq:Lin_Sys}
\left\{
\begin{array}{rcl}
U'(t)&=&(\A-\nu I)U(t) \\
U(0)&=&(n_0,p_0,z_0)\in \X
\end{array}
\right.
\end{equation}
verifies (\ref{Eq:Maj_n})-(\ref{Eq:Maj_p})-(\ref{Eq:Maj_z}), for every $t\geq 0$. It is clear that
$$z(t)=z_0 e^{-(\nu+m)t}$$
so that (\ref{Eq:Maj_z}) is satisfied for every $t\geq 0$. To get the result on $n$ and $p$, we use the truncation method of Stampacchia (see \textit{e.g.} \cite{Brezis99}, Theorem X.3, p. 211). In all the following, we will use the notation
$$ \KK^\sigma:=\max\{0,\sup_{h\in[0,H]}\sigma(h)\}\geq 0, \qquad \KK_\sigma:=-\min\{0,\inf_{h\in[0,H]}\sigma(h)\}\geq 0$$
for every function $\sigma \in L^\infty(0,H)$. Define the function $G\in \Co ^1(\R)$ such that
\begin{enumerate}
\item $|G'(x)|\leq M, \quad \forall x \in \R$,
\item $G$ is strictly increasing on $(0,\infty)$, 
\item $G(x)=0, \quad \forall x\leq 0$.
\end{enumerate}
We introduce the functions
\begin{equation} \label{Eq:Kappa}
\kappa:x\mapsto\int_0^x G(\sigma)d\sigma, \quad \forall x\in \R,
\end{equation}
$$\varphi_1:t\mapsto\int_0^H\kappa(p(t,h)-\KK^{p_0})dh, \ \ \varphi_2:t\mapsto\int_0^H\kappa(\overline{p}(t,h)-\KK_{p_0})dh, \ \ \forall t\geq 0,$$
$$\varphi_3:t\mapsto\int_0^H\kappa(n(t,h)-\KK^{n_0})dh, \ \ \varphi_4:t\mapsto\int_0^H\kappa(\overline{n}(t,h)-\KK_{n_0})dh, \ \ \forall t\geq 0,$$
where 
$$\overline{p}:=-p, \qquad \overline{n}:=-n.$$
Define the set
$$
\Y:=\{\varphi\in\Co ([0,\infty),\R), \ \ \varphi(0)=0, \ \ \varphi \geq 0 \ \textnormal{ on } \ [0,\infty),\ \ \varphi \in \Co ^1((0,\infty),\R)\}.
$$
We can show that $\varphi_i \in \Y$ for every $i\in \llbracket 1,4\rrbracket$, using (\ref{Eq:Lin_C1}). Moreover, we have
\begin{eqnarray*}
\varphi_1'(t)&=&\displaystyle \int_0^H G(p(t,h)-\KK^{p_0})\frac{\partial p}{\partial t}(t,h)dh \\
&=&\displaystyle \int_0^H G(p(t,h)-\KK^{p_0})\left(\frac{\partial^2 p}{\partial h^2}(t,h)-(\nu+m_p) p(t,h)\right)dh \\
&=&-\displaystyle \int_0^H G'(p(t,h)-\KK^{p_0})\left|\frac{\partial p}{\partial h}(t,h)\right|^2 dh \vspace{0.1cm} \\
&&-\displaystyle \int_0^H G (p(t,h)-\KK^{p_0})(\nu+m_p) p(t,h)dh \leq 0, \quad \forall t>0,
\end{eqnarray*}
since $G'\geq 0$. Finally $\varphi_1'\leq 0$ on $(0,\infty)$ and consequently $\varphi_1\equiv 0$, so
$$p(t,h)\leq \KK^{p_0}\leq \max\{0,\sup_{h\in[0,H]}p_0(h)\}, \quad \forall t\geq 0, \quad \textnormal{a.e. } h\in [0,H].$$
The same computations lead to
\begin{eqnarray*}
\begin{array}{rcl}
\varphi_2'(t)&=&-\displaystyle \int_0^H G'(\overline{p}(t,h)-\KK_{p_0})\left|\frac{\partial \overline{p}}{\partial h}(t,h)\right|^2 dh \vspace{0.1cm} \\
&&-\displaystyle \int_0^H G (\overline{p}(t,h)-\KK_{p_0})(\nu+m_p) \overline{p}(t,h)dh \leq 0
\end{array}
\end{eqnarray*}
for every $t>0$ and $\varphi_2 \equiv 0$ on $(0,\infty)$, so
$$p(t,h)\geq -\KK_{p_0}\geq \min\{0,\inf_{h\in[0,H]}p_0(h)\}, \quad \forall t\geq 0, \quad \textnormal{a.e. } h\in[0,H]$$
and (\ref{Eq:Maj_p}) is satisfied. Similarly, we have
\begin{eqnarray*}
\begin{array}{rcl}
\varphi_3'(t)&=&\displaystyle \int_0^H G (n(t,h)-\KK^{n_0})\frac{\partial n}{\partial t}(t,h)dh \\
&=&\displaystyle \int_0^H G (n(t,h)-\KK^{n_0})\left(\frac{\partial^2 n}{\partial h^2}(t,h)-\nu n(t,h)\right)dh \\
&=&-\displaystyle \int_0^H G'(n(t,h)-\KK^{n_0})\left|\frac{\partial n}{\partial h}(t,h)\right|^2 dh \\
&&-\displaystyle \int_0^H G(n(t,h)-\KK^{n_0})\nu n(t,h)dh \leq 0, \quad \forall t>0,
\end{array}
\end{eqnarray*}
since $G(n(t,H)-\KK_{n_0})=G(-\KK_{n_0})=0$. We can also show that
$$\varphi_4'(t)\leq 0, \quad \forall t>0$$
whence (\ref{Eq:Maj_n}) holds. Considering an initial condition $(n_0,p_0,z_0)\in \X^\infty$ leads easily to (\ref{Eq:Sg_Bound}).

\item Let us prove now that $\{T_{\A-\nu}(t)\}_{t\geq 0}$ is positive for every $\nu \geq0$, that is, the resolvent
$$R_{\lambda}(A-\nu I):=((\lambda+\nu)I-\A)^{-1}$$
is positive for $\lambda$ large enough (see \textit{e.g.} \cite{Clement87}, p. 165). Let $\nu \geq 0$, $\lambda\geq 0$, $\overline{H}:=(h_n,h_p,h_z)\in \X_+$. As point 1. above, one can consider
$$U:=(n,p,z)=(R_\lambda(\A-\nu I))\overline{H}\in D(\A).$$
We have to prove that $U\in \X_+$. Since $\Co ([0,H])$ is dense in $L^2(0,H)$, we may assume without loss of generality (using the dissipativity and the closedness of $\A$) that
$$h_n\in \Co ([0,H]), \qquad h_p\in \Co ([0,H]).$$
Thus, we have
$$-p''+(\lambda+\nu+m_p)p=h_p,$$
with $p\in H^2(0,H)\subset \Co ([0,H])$. Since $h_p$ is continuous, then the latter equation implies that $p''$ is also continuous and then $p\in \Co ^2([0,H])$. The absolute minimum of $p$ is achieved at some $\overline{h}\in[0,H]$. Suppose that $p(\overline{h})<0$. The function 
$$q:=-p$$
verifies the equation
$$q''-(\lambda+\nu+m_p)q=h_p\geq 0,$$
and its absolute maximum is reached at $\overline{h}$. 
If $\overline{h}=0$, then by Hopf's maximum principle (see \cite{ProtterWeinberger84}, Theorem 4, p. 7), we would have
$$-p'(0)=q'(0)>0,$$
which contradicts the Neumann boundary condition. If $\overline{h}=H$ then by Hopf's maximum principle, we would have
$$-p'(H)=q'(H)<0,$$
which is absurd. Finally, if $\overline{h}\in(0,H)$ then 
$$0\geq -p''(\overline{h})=h_p(\overline{h})-(\lambda+\nu+m_p)p(\overline{h})>0$$
which is not possible. Consequently
$$p(h)\geq p(\overline{h})\geq 0, \ \forall h\in[0,H].$$
Similarly, $n\in \Co ^2([0,H])$ verifies the equation
$$-n''+(\lambda+\nu)n=h_n\geq 0.$$
Moreover, $n$ reaches its absolute minimum at $\overline{h}\in[0,H]$. If $n(\overline{h})<0$, then the same arguments than before lead to
$$\overline{h}=H,$$
which contradicts the fact that $n(H)=0$.
Consequently 
$$n(h)\geq n(\overline{h})\geq 0, \quad \forall h\in[0,H].$$
Finally, it is clear that
$$z=\frac{h_z}{\lambda+\nu+m_p}\geq 0,$$
which proves that $R_\lambda(\A+\nu I)$ is positive and consequently that the $C_0$-semigroup $\{T_{\A-\nu I}(t)\}_{t\geq 0}$ is positive for every $\nu\geq 0$.

\item Now we want to prove (\ref{Eq:Sg_Pos}). Let $\ep \geq 0$, $\nu \geq 0$, $(n_0,p_0,z_0)\in \X_{\ep}$ and $(n,p,z)$ the solution of (\ref{Eq:Lin_Sys}). Because of the positivity of $\{T_{\A-\nu I}(t)\}_{t\geq 0}$, it only remains to prove that
\begin{eqnarray*}
n(t,h)\geq -\ep, \quad \forall t\geq 0, \quad \textnormal{a.e. } h \in[0,H]
\end{eqnarray*}
which arises from (\ref{Eq:Maj_n}).
\end{enumerate}
\end{proof}

\subsection{Nonlinear part}

In this section we handle the nonlinear part by showing a Lipschitz and a positivity properties of $f_{i}$ for each $i\in\{1,2\}$. Let $m>0$, then define the set
$$B_m:=\{(n,p,z)\in \X^\infty: \|(n,p,z)\|_{\X^\infty}\leq m\}.$$

\begin{proposition}\label{Prop:Lips}
For every $m>0$, there exists some constant $k_m\geq 0$ such that for every $((n_1,p_1,z_1),(n_2,p_2,z_2))\in \left(\X^\infty_{n_H+(2\chi)^{-1}} \cap B^\infty_m\right)^2$, we have
$$\left\|f_{i}\left(\begin{array}{cc}
n_2 \\
p_2 \\
z_2
\end{array}\right)-f_{i}\left(\begin{array}{cc}
n_1 \\
p_1 \\
z_1
\end{array}\right)\right\|_{\X^\infty}\leq k_m \left\|\left(\begin{array}{cc}
n_2 \\
p_2 \\
z_2
\end{array}\right)-\left(\begin{array}{cc}
n_1 \\
p_1 \\
z_1
\end{array}\right)\right\|_{\X^\infty}.$$
\end{proposition}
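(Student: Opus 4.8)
The plan is to bound each of the three components of the difference $f_i(n_2,p_2,z_2)-f_i(n_1,p_1,z_1)$ separately---the first two in $L^\infty(0,H)$, the third in absolute value---and then to sum these bounds, since $\|\cdot\|_{\X^\infty}$ is exactly that sum. Two consequences of the hypotheses are used throughout. Membership in $\X^\infty_{n_H+(2\chi)^{-1}}$ forces $n+n_H\geq -\frac{1}{2\chi}$ almost everywhere, hence $1+\chi(n+n_H)\geq \frac12>0$, so the rational factor
$$\psi(n):=\frac{n+n_H}{1+\chi(n+n_H)}$$
stays bounded away from its pole; the same membership also yields $p\geq 0$ and $z\geq 0$, so that $g(p)$ is well defined via Assumption~\ref{Assump}. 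Membership in $B^\infty_m$ gives $\|n\|_{L^\infty}\leq m$, $\|p\|_{L^\infty}\leq m$ and $|z|\leq m$.

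First I would control the rational factor. Since $\psi'(n)=(1+\chi(n+n_H))^{-2}$ and the denominator is at least $\frac12$, one gets $|\psi|\leq \frac{1}{\chi}$ and $|\psi'|\leq 4$ on the admissible set, whence $\psi$ is bounded and $4$-Lipschitz in $n$, uniformly in $h$. Next I would control the light operator. For $f_1$ the weight $e^{-\gamma h}\in(0,1]$ gives at once $\|r e^{-\gamma\cdot}p\|_{L^\infty}\leq rm$ and a Lipschitz constant $r$ in $p$. For $f_2$, writing $w(p):=\exp(-\nu\int_0^\cdot p(x)\,dx)$, the positivity $p\geq 0$ makes the exponent nonnegative so that $0<w(p)\leq 1$; since $x\mapsto e^{-x}$ is $1$-Lipschitz on $[0,\infty)$ and the functional $p\mapsto\int_0^\cdot p$ satisfies $\|\int_0^\cdot(p_2-p_1)\|_{L^\infty}\leq H\|p_2-p_1\|_{L^\infty}$, one obtains $\|w(p_2)-w(p_1)\|_{L^\infty}\leq \nu H\|p_2-p_1\|_{L^\infty}$, and hence that $r\,w(p)\,p$ is bounded by $rm$ and Lipschitz in $p$.

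The first two components each contain, up to sign, the product $L_i(p)\psi(n)$, handled by the identity
$$L_i(p_2)\psi(n_2)-L_i(p_1)\psi(n_1)=[L_i(p_2)-L_i(p_1)]\psi(n_2)+L_i(p_1)[\psi(n_2)-\psi(n_1)].$$
Taking $L^\infty$-norms and inserting the boundedness of $\psi$ and of $L_i(p_1)$ together with the two Lipschitz estimates bounds this product difference by $C(\|p_2-p_1\|_{L^\infty}+\|n_2-n_1\|_{L^\infty})$, the constant $C$ depending only on $m,r,\chi,H$ and $\gamma$ (resp.\ $\nu$). The additional term $-g(p)$ in the second component is controlled directly by Assumption~\ref{Assump}, which gives $\|g(p_2)-g(p_1)\|_{L^\infty}\leq l_m\|p_2-p_1\|_{L^\infty}$.

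For the scalar third component I would use the same add-and-subtract trick,
$$\frac{kz_2}{H}\int_0^H g(p_2)\,dh-\frac{kz_1}{H}\int_0^H g(p_1)\,dh=\frac{k(z_2-z_1)}{H}\int_0^H g(p_2)\,dh+\frac{kz_1}{H}\int_0^H[g(p_2)-g(p_1)]\,dh,$$
together with $|z_1|\leq m$, the bound $|\int_0^H g(p_2)\,dh|\leq H\|g(p_2)\|_{L^\infty}$, and the a priori estimate $\|g(p)\|_{L^\infty}\leq l_m\|p\|_{L^\infty}+\|g(0)\|_{L^\infty}\leq l_m m+\|g(0)\|_{L^\infty}$ (Assumption~\ref{Assump} applied with the admissible argument $p=0$), to bound the scalar difference by $C'(|z_2-z_1|+\|p_2-p_1\|_{L^\infty})$. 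Summing the three component estimates then gives the claim, with $k_m$ the sum of the constants. The main obstacle is conceptual rather than computational: it is keeping the rational nonlinearity away from its pole at $n+n_H=-1/\chi$, which is exactly why the estimate is carried out on $\X^\infty_{n_H+(2\chi)^{-1}}$, where $1+\chi(n+n_H)\geq\frac12$; the only genuinely new computation is the Lipschitz control of the self-shading weight $w(p)$ for $f_2$, where the built-in positivity $p\geq 0$ is essential.
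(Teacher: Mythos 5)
Your proof is correct and follows essentially the same route as the paper's: an add-and-subtract decomposition of the product terms, the lower bound $1+\chi(n+n_H)\geq\frac12$ on $\X^\infty_{n_H+(2\chi)^{-1}}$ to tame the rational factor, and Assumption~\ref{Assump} for the $g$- and $z$-terms. The only difference is one of completeness in your favor: the paper treats only $f_1$ and declares $f_2$ ``similar,'' whereas you explicitly verify the boundedness and the $\nu H$-Lipschitz estimate for the self-shading weight $\exp\left(-\nu\int_0^\cdot p\right)$, which is the one step where the positivity $p\geq 0$ is genuinely needed.
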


\begin{proof}
We first prove the result for $f_{1}$, the case $f_{2}$ being similar.\\
Let $((n_1,p_1,z_1),(n_2,p_2,z_2))\in \left(\X_{n_H+(2\chi)^{-1}} \cap B_m\right)^2$. Some computations give
\begin{eqnarray*}
\begin{array}{rcl}
&&\left\|f_{1}\left(\begin{array}{cc}
n_2, p_2, z_2
\end{array}\right)^T-f_{1}\left(\begin{array}{cc}
n_1, p_1, z_1
\end{array}\right)^T \right\|_{\X^\infty} \\
&\leq & 2r\left\|\frac{p_2(n_2+n_H)}{1+\chi(n_2+n_H)}-\frac{p_1(n_1+n_H)}{1+\chi(n_1+n_H)}\right\|_{L^{\infty}}+\left\|z_1 g(p_1)-z_2 g(p_2)\right\|_{L^\infty}\\
&&+\frac{k}{H}\displaystyle \int_0^H \left\| z_1 g(p_1)-z_2 g(p_2)\right\|_{L^\infty}dh \\
&\leq& 2r\left(m\|n_2-n_1\|_{L^\infty}(1+\chi (m+n_H))+(m+n_H)\|p_2-p_1\|_{L^\infty}\right) \\
&&+\left(m l_m\|p_1-p_2\|_{L^\infty}+m l_m |z_1-z_2|\right)\left(1+k\right)
\end{array}
\end{eqnarray*}
by Assumption \ref{Assump}, which proves the result.
\end{proof}

\begin{proposition}\label{Prop:Pos}
For every $m>0$, there exists $\lambda_m\geq 0$ and $\eta_m\geq 0$ such that for every $(n,p,z)\in \X^\infty_{n_H+(2\chi)^{-1}}\cap B_m$, we have
$$f_i(n,p,z)+\lambda_m(n,p,z)\in \X^\infty_{\eta_m}.$$
\end{proposition}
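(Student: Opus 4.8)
The plan is to argue coordinate by coordinate, exploiting the sign and size information carried by any point of $\X^\infty_{n_H+(2\chi)^{-1}}\cap B_m$: one has $p\geq 0$ a.e., $z\geq 0$, the bounds $\|n\|_{L^\infty}\leq m$, $\|p\|_{L^\infty}\leq m$, $|z|\leq m$, and, crucially, $n+n_H\geq-\frac{1}{2\chi}$. I would choose $\lambda_m$ and $\eta_m$ so that the three coordinates of $f_i(n,p,z)+\lambda_m(n,p,z)$ acquire the signs required for membership in $\X^\infty_{\eta_m}$, namely first coordinate $\geq-\eta_m$, second coordinate $\geq 0$, third coordinate $\geq 0$. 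As in Proposition \ref{Prop:Lips}, it suffices to treat $f_1$, since the factors $\exp(-\gamma\,\cdot)$ and $\exp\left(-\nu\int_0^{\cdot}p\right)$ both lie in $(0,1]$ (the latter because $p\geq 0$) and play identical roles.

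The preliminary step is to control the rational factor. Writing $w:=n+n_H$, membership in $\X^\infty_{n_H+(2\chi)^{-1}}$ gives $w\geq-\frac{1}{2\chi}$, hence $1+\chi w\geq\frac12>0$; this is exactly where the restriction away from the singularity at $w=-1/\chi$ is used. Since $w\mapsto\frac{w}{1+\chi w}$ has derivative $(1+\chi w)^{-2}>0$ and limit $1/\chi$ at $+\infty$, while equalling $-1/\chi$ at $w=-\frac{1}{2\chi}$, I get the two-sided bound $\left|\frac{n+n_H}{1+\chi(n+n_H)}\right|\leq\frac1\chi$ a.e.

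The third coordinate needs no work: since $z\geq 0$, $\lambda_m\geq 0$ and $g(p)\in L^\infty_+(0,H)$ by Assumption \ref{Assump}, the quantity $\frac{kz}{H}\int_0^H g(p)\,dh+\lambda_m z$ is nonnegative for any $\lambda_m\geq 0$. For the second coordinate, I bound the rational term below using $p\geq 0$ and the previous estimate, obtaining $r\exp(-\gamma h)p\frac{n+n_H}{1+\chi(n+n_H)}\geq-\frac{r}{\chi}p$, and I invoke the translation property $g(p)\leq\lambda p$ a.e. from Assumption \ref{Assump}, so that $-g(p)\geq-\lambda p$. Adding $\lambda_m p$ leaves the lower bound $\left(\lambda_m-\lambda-\frac r\chi\right)p$, which is $\geq 0$ as soon as $\lambda_m\geq\lambda+\frac r\chi$ (here the nonnegativity of $p$ is essential). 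This dictates the choice $\lambda_m:=\lambda+\frac r\chi$. The first coordinate is where the shift $\eta_m$ becomes indispensable, because $-r\exp(-\gamma h)p\frac{n+n_H}{1+\chi(n+n_H)}+\lambda_m n$ genuinely can be negative; there I only need a uniform lower bound, namely $\geq-\frac{rm}{\chi}-\lambda_m m$ obtained from $\|p\|_{L^\infty}\leq m$, the bound on the fraction, and $n\geq-m$. Setting $\eta_m:=\frac{rm}{\chi}+\lambda_m m$ then yields first coordinate $\geq-\eta_m$, and the three estimates together give $f_i(n,p,z)+\lambda_m(n,p,z)\in\X^\infty_{\eta_m}$.

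The only genuinely delicate point is the bookkeeping around the singularity: every estimate rests on $1+\chi(n+n_H)\geq\frac12$, valid precisely on $\X^\infty_{n_H+(2\chi)^{-1}}$, without which neither the boundedness of the rational factor nor the componentwise sign control would survive. Granted that lower bound, the argument reduces to elementary sign-and-size inequalities, and no tool beyond Assumption \ref{Assump} is needed.
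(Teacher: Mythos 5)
Your proof is correct and follows essentially the same route as the paper's: a componentwise sign-and-size analysis with an explicit choice of $\lambda_m$ and $\eta_m$, resting on the lower bound $1+\chi(n+n_H)\geq\frac12$ valid on $\X^\infty_{n_H+(2\chi)^{-1}}$, on $p\geq 0$, $z\geq 0$, $\|(n,p,z)\|_{\X^\infty}\leq m$, and on the translation property of $g$ from Assumption \ref{Assump}. One discrepancy to flag, which traces back to a typo in the paper rather than to your reasoning: the displayed formulas for $f_1,f_2$ show $-g(p)$ in the second coordinate, but the system (\ref{sistema3}) and the paper's own proofs (that of Proposition \ref{Prop:Lips}, which estimates $z_1g(p_1)-z_2g(p_2)$, and that of the present proposition, whose second coordinate reads $p(\lambda_m+\cdots)-zg(p)$) make clear that the intended term is $-zg(p)$. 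Your step $-g(p)\geq-\lambda p$ must then be replaced by $-zg(p)\geq-mg(p)\geq-m\lambda p$, using $0\leq z\leq m$ and $g(p)\geq 0$, so the correct choice is $\lambda_m=m\lambda+\frac{r}{\chi}$ rather than $\lambda+\frac{r}{\chi}$; this is a one-line repair that changes nothing structural in your argument.

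It is worth noting that on the point you call genuinely delicate, your accounting is actually tighter than the paper's. The paper takes $\lambda_m=m\lambda$, which absorbs $-zg(p)$ but leaves the term $re^{-\gamma h}p\,\frac{n+n_H}{1+\chi(n+n_H)}$ in the second coordinate unaccounted for; since $n+n_H$ may be as small as $-\frac{1}{2\chi}$, that term can be as negative as $-\frac{r}{\chi}p$, so the extra $\frac{r}{\chi}$ you add to $\lambda_m$ — justified by the two-sided bound $\bigl|\frac{n+n_H}{1+\chi(n+n_H)}\bigr|\leq\frac{1}{\chi}$, which you prove by monotonicity of $w\mapsto\frac{w}{1+\chi w}$ on $[-\frac{1}{2\chi},\infty)$ — is precisely what is needed to make the nonnegativity of the second coordinate airtight. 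Your $\eta_m=\frac{rm}{\chi}+\lambda_m m$ differs from the paper's $\eta_m=m\lambda_m+rm^2+rmn_H$, but since the proposition only asserts the existence of some admissible constants this is immaterial; your third-coordinate observation and your reduction of $f_2$ to $f_1$ via $\exp\bigl(-\nu\int_0^h p\bigr)\in(0,1]$, which uses $p\geq 0$, both match the paper.
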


\begin{proof}
Let $(n,p,z)\in \X^\infty_{n_H+(2\chi)^{-1}}\cap B_m$, then
\begin{eqnarray*}
&f_1(n,p,z)+\lambda_m(n,p,z) \\
&=\left(\begin{array}{cc}
n\left(\lambda_m-r\exp(-\gamma \cdot)\frac{p}{1+\chi (n+n_H)}\right)-r\exp(-\gamma \cdot)\frac{p n_H}{1+\chi(n+n_H)} \\
p\left(\lambda_m+ r \exp(-\gamma \cdot)  \frac{n+n_H}{1+\chi (n+n_H)}\right)-zg(p) \\
z\left(\lambda_m+  \frac{k}{H} \displaystyle \int_0^H g(p)(t,h)dh\right)
\end{array}\right).
\end{eqnarray*}
Note that by Assumption \ref{Assump}, there exists $\lambda>0$ such that $\lambda p-g(p)\geq 0$, so choosing $\lambda_m\geq m \lambda$ induces that $p \lambda_m-zg(p)\geq m(\lambda p-g(p))\geq 0$. Consequently, it suffices to consider
\begin{equation}
\label{Eq:Lambda_m}
\lambda_m=m\lambda
\end{equation}
and
\begin{equation}
\label{Eq:Eta_m}
\eta_m=m\lambda_m+rm^2+r m n_H
\end{equation}
which ends the proof.
\end{proof}

\subsection{Local existence and positivity}

We are now able to show existence and uniqueness of a solution.

\begin{theorem}\label{Thm:Exist_Loc}
Suppose that operator $L_h$ has one of the shapes given in (\ref{Lh1}) or in (\ref{Lh2}). Then for every initial condition $(n_0,p_0,z_0)\in \X^\infty_{n_H}$, there exists a unique solution $(n,p,z)\in \Co \left([0,t_{\max}),\X^\infty_{n_H}\right)$ for the system (\ref{sistema3}), where $t_{\max}\leq \infty$.
\end{theorem}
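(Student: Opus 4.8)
The plan is to rewrite (\ref{Eq:CauchyPb}) in mild form and solve it by a contraction argument, establishing positivity afterwards by a maximum-principle argument rather than from the abstract shift. Fix $U_0\in\X^\infty_{n_H}$, set $\rho:=\|U_0\|_{\X^\infty}+1$, and seek a fixed point of
\[
\Phi(U)(t):=T_{\A}(t)U_0+\int_0^t T_{\A}(t-s)f_i(U(s))\,ds
\]
on the closed subset $\mathcal{E}_\tau$ of $\Co([0,\tau],\X^\infty)$ made of the curves $U$ with $U(0)=U_0$, $\|U(t)\|_{\X^\infty}\le\rho$ and $n(t,h)\ge -n_H-(2\chi)^{-1}$ for all $t$, so that the rational factor stays regular. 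To evaluate $g$ along iterates that may momentarily leave $L^\infty_+$, I would first replace $g$ by a globally Lipschitz extension agreeing with it on $L^\infty_+$. The self-mapping of $\mathcal{E}_\tau$ then holds for $\tau$ small: the bound (\ref{Eq:Sg_Bound}) (with $\nu=0$) controls the $\X^\infty$-norm, while the order-preservation (\ref{Eq:Sg_Pos}) and the lower bound (\ref{Eq:Maj_n}), applied to the uniformly bounded Duhamel integrand, keep $n\ge -n_H-(2\chi)^{-1}$; the strict contraction follows from the Lipschitz estimate of Proposition \ref{Prop:Lips} once $\tau$ is small. Banach's fixed point theorem yields a unique local solution, which (\ref{Eq:Lin_C1}) upgrades to a classical one, legitimizing the pointwise computations below.

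It then remains to show that this solution lies in $\X^\infty_{n_H}$, which I would prove componentwise by the Stampacchia truncation used in the proof of Theorem \ref{Thm:Generation}, with $G,\kappa$ as in (\ref{Eq:Kappa}). Since the $z$-equation is linear in $z$ with $z_0\ge0$, an explicit integration gives $z(t)\ge0$. For $p$, with $\overline p:=-p$ and $\varphi_p(t):=\int_0^H\kappa(\overline p(t,h))\,dh$, differentiating and integrating by parts (the Neumann conditions kill the boundary terms) produces a nonpositive diffusion contribution; the only reaction term of adverse sign is the growth term, but it carries the factor $L_h(p)$, proportional to $p$ and hence vanishing at $p=0$, together with a coefficient $(n+n_H)/(1+\chi(n+n_H))$ kept bounded by $1/\chi$ through the constraint $n\ge -n_H-(2\chi)^{-1}$; it is therefore dominated by $C\varphi_p$ for a suitable choice of $G$. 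A Gronwall argument with $\varphi_p(0)=0$ then forces $\varphi_p\equiv0$, i.e. $p(t)\ge0$, whence $g(p)$ coincides with the original functional and the solution solves (\ref{sistema3}).

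The crux is the lower bound $n(t)\ge -n_H$, for which the ``positivity up to translation'' of Proposition \ref{Prop:Pos} is of no help: a constant shift can make the $p$- and $z$-components of $f_i+\lambda_m I$ nonnegative, but it leaves the first component only bounded below by $-\eta_m$, so the positivity (\ref{Eq:Sg_Pos}) of $T_{\A-\lambda_m I}$ cannot close the cone $\X^\infty_{n_H}$. Instead I would exploit that the consumption term $L_h(p)\,(n+n_H)/(1+\chi(n+n_H))$ vanishes identically at $n=-n_H$. Setting $\overline n:=-(n+n_H)$ and $\varphi_n(t):=\int_0^H\kappa(\overline n(t,h))\,dh$, one has $\varphi_n(0)=0$ because $n_0+n_H\ge0$; the boundary terms vanish by the Neumann condition at $h=0$ and because $\overline n(t,H)=-n_H\le0$ forces $G(\overline n(t,H))=0$; and on $\{\overline n>0\}$, i.e. where $-n_H-(2\chi)^{-1}\le n<-n_H$, the factor $(n+n_H)/(1+\chi(n+n_H))$ is negative while $L_h(p)\ge0$ (now that $p\ge0$), so the reaction contribution is nonpositive. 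Hence $\varphi_n'\le0$, giving $\varphi_n\equiv0$ and $n(t)\ge -n_H$, i.e. $U(t)\in\X^\infty_{n_H}$.

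Finally, the passage from $[0,\tau]$ to a maximal interval $[0,t_{\max})$ is routine: by uniqueness the local solutions agree on overlaps and glue into a solution on $[0,t_{\max})$, with $t_{\max}\le\infty$ defined as the supremum of existence times. I expect the genuine obstacle to be precisely this invariance of $\X^\infty_{n_H}$ across the singular rational nonlinearity, that is, keeping $1+\chi(n+n_H)$ away from zero (ensured for short times by the buffer $(2\chi)^{-1}$ and afterwards by the bound $n\ge -n_H$ itself) while closing the Stampacchia estimates despite the coupling between $p$ and $n$; once these are in hand the contraction and continuation arguments are standard.
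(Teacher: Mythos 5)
Your skeleton coincides with the paper's: both run a Banach--Picard iteration on the mild formulation over a short interval inside the buffer space $\X^\infty_{n_H+(2\chi)^{-1}}$ (so that $1+\chi(n+n_H)\geq 1/2$), and both recover the sharp bound $n\geq -n_H$ only \emph{a posteriori} by the Stampacchia truncation --- your third paragraph, with $\overline n=-(n+n_H)$, the vanishing of $G$ at $h=H$ and the sign of $(n+n_H)/(1+\chi(n+n_H))$ on $\{n<-n_H\}$, is essentially the paper's own computation of (\ref{Eq:Bound_n}). The genuine divergence is the treatment of $p,z\geq 0$. The paper does \emph{not} try to close $\X^\infty_{n_H}$ with the shift either; rather, it iterates with the shifted pair $\A_m=\A-\lambda_m I$, $f_m=f_i+\lambda_m I$ on $\ZZ=\Co([0,\tau],\X^\infty_{n_H+(2\chi)^{-1}}\cap B_m)$: Proposition \ref{Prop:Pos} makes the $p$- and $z$-components of $f_m$ nonnegative and quantifies the downward drift of the $n$-component by $\eta_m$, so with (\ref{Eq:Sg_Pos}) and $\tau\leq (2\chi\eta_m)^{-1}$ every iterate stays in the cone; no extension of $g$ and no truncation for $p$ are ever needed. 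You instead drop the cone from the fixed-point set, extend $g$, and recover $z\geq 0$ by explicit ODE integration and $p\geq 0$ by truncation plus Gronwall. That is a workable alternative, but your claim that Proposition \ref{Prop:Pos} ``is of no help'' is overstated: it is exactly what spares the paper your extra extension/truncation layer.

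Two steps need repair. First, ``a globally Lipschitz extension agreeing with $g$ on $L^\infty_+$'' is not enough for your $p$-step: in the $\overline p=-p$ equation the predation term becomes $+z\,\tilde g(p)$ with $z\geq 0$, and for an arbitrary extension this is an adverse term on $\{\overline p>0\}$ that your Gronwall cannot absorb (the pointwise inequality $G(x)\leq C\kappa(x)$ fails near $x=0$ for any admissible $G$, so $\int_0^H G(\overline p)\,\tilde g(p)\,dh$ is not dominated by $C\varphi_p$). You must take the specific extension $\tilde g(p):=g(p^+)$ and use the pointwise consequence $0\leq g(q)\leq \lambda q$ of Assumption \ref{Assump}: then $\tilde g(p)$ vanishes a.e.\ on $\{p<0\}$, i.e.\ wherever $G(\overline p)>0$, and only the growth term $-L_h(p)(n+n_H)/(1+\chi(n+n_H))$ survives, for which your bound $xG(x)\leq C\kappa(x)$ (valid e.g.\ for $G(x)=x^2/(1+x)$, $x\geq 0$) does close the Gronwall loop. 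Relatedly, (\ref{Eq:Sg_Pos}) does not literally apply to your Duhamel integrand, since $f_i(U(s))$ need not lie in any $\X_\ep$ once $p,z$ may be negative (those cones still require nonnegative $p$- and $z$-components); because $\A$ is diagonal, the needed lower bound on the first component follows from the scalar estimates (\ref{Eq:Maj_n}) and (\ref{Eq:Sg_Bound}) alone, so this is a phrasing repair rather than a failure.

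Second, ``which (\ref{Eq:Lin_C1}) upgrades to a classical one'' is not a proof: (\ref{Eq:Lin_C1}) concerns the homogeneous linear flow, and a mild solution with merely continuous-in-time forcing need not be differentiable, yet all three of your truncation arguments differentiate $\varphi$ and integrate by parts pointwise. The paper closes this by first taking $(n_0,p_0,z_0)\in D(\A)\cap \X^\infty_{n_H}$ (so the solution is classical by \cite[Theorem 6.1.7]{Pazy83}) and then passing to general data by density, using the $1/2$-contraction estimate, the condition $n\geq -n_H$ being closed under $\X^\infty$-limits. You can legitimately bypass that detour, but then you must invoke analytic smoothing: $\A$ is self-adjoint and dissipative, hence $\{T_\A(t)\}$ is analytic; with $f_i(U(\cdot))$ bounded, the Duhamel term is H\"older continuous in time, so $t\mapsto f_i(U(t))$ is locally H\"older by the Lipschitz property, and the solution is classical on $(0,\tau]$, which suffices for your computations. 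As written, this step is a gap; with either patch (your route via analyticity, or the paper's density argument), the rest of your proposal --- contraction, the three truncations in the order $z$, $p$, $n$, and the standard continuation to $[0,t_{\max})$ --- is sound.
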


\begin{proof}
Let $(n_0,p_0,z_0)\in \X^\infty_{n_H}$ and
$$m=2\|(n_0,p_0,z_0)\|_{\X^\infty}.$$
Define the constants $\lambda_m\geq 0$, $\eta_m\geq 0$ respectively by (\ref{Eq:Lambda_m}) and (\ref{Eq:Eta_m}), the linear operator
$$\A_m=\A-\lambda_m I:D(\A)\subset \X\to \X,$$
and for $i=1,2$ the nonlinear function 
$$f_m=f_i+\lambda_m I:\X^\infty_{n_H+(2\chi)^{-1}}\to \X.$$
We readily see that $\A_m$ is the infinitesimal generator of a $C_0$-semigroup $\{T_{\A_m}(t)\}_{t\geq 0}$ on $\X$. Let 
$$
\tau=\min\left\{
\frac{1}{2(k_m+\lambda_m)},\frac{1}{2\chi \eta_m}\right\}
>0.
$$
A consequence of Theorem \ref{Thm:Generation} and Proposition \ref{Prop:Lips} is that the nonlinear operator
$$G:\Co \left([0,\tau],\X^\infty_{n_H+(2\chi)^{-1}}\right)\to \Co ([0,\tau],\X)$$
defined by
\begin{equation}\label{Eq:G}
G\left(\begin{array}{cc}
n(t,\cdot) \\
p(t,\cdot) \\
z(t)
\end{array}\right)
=T_{\A_m}(t)\left(\begin{array}{cc}
n_0 \\
p_0 \\
z_0
\end{array}\right)+\displaystyle \int_0^t T_{\A_m}(t-s)f_m\left(\begin{array}{cc}
n(s,\cdot) \\
p(s,\cdot) \\
z(s)
\end{array}\right)ds
\end{equation}
is a $1/2$-shrinking operator on
$$\ZZ:=\Co \left([0,\tau],\X^\infty_{n_H+(2\chi)^{-1}}\cap B_m\right)$$
with $G(\ZZ)\subset B_m$, since 
$$t\leq \tau\leq \frac{1}{2(k_m+\lambda_m)}.$$
Moreover, using Theorem \ref{Thm:Generation}, the fact that
$$\tau\leq \frac{1}{2\chi \eta_m},$$
and Proposition \ref{Prop:Pos}, then
$$G\left(\begin{array}{cc}
n(t,\cdot) \\
p(t,\cdot) \\
z(t)
\end{array}\right)\in \X_{n_H+(2\chi)^{-1}} \quad \forall t\in [0,\tau].$$
Consequently $G$ preserves the space $\ZZ$. The Banach-Picard theorem then implies the existence and uniqueness of a local solution 
$$(n,p,z)\in \Co \left([0,\tau],\X^\infty_{n_H+(2\chi)^{-1}}\cap B_m\right).$$
It remains to prove that
\begin{equation}\label{Eq:Bound_n}
n(t,h) \geq -n_H, \quad \forall t\in[0,\tau], \quad \forall h\in[0,H].
\end{equation}
First, suppose that
\begin{equation}\label{Hyp:CI}
(n_0,p_0,z_0)\in D(\A)\cap \X^\infty_{n_H}.
\end{equation}
Using \cite[Theorem 6.1.7, p. 190]{Pazy83}, the solution $(n,p,z)$ of (\ref{sistema3}) is classical. Consequently, the function
$$\overline{n}:=-n$$
satisfies the equation
$$\frac{\partial \overline{n}}{\partial t}(t,h)=\frac{\partial^2 \overline{n}}{\partial h^2}(t,h)+L_h(p)(t,h)\left(\frac{n_H-\overline{n}(t,h)}{1+\chi(n+n_H)}\right),$$
for every $t\in(0,\tau]$ and a.e. $h\in[0,H]$.
Define the function
$$\varphi_{\overline{n}}(t)=\int_0^H \kappa(\overline{n}(t,h)-n_H)dh,$$
where $\kappa$ is given by (\ref{Eq:Kappa}), for every $t\in (0,\tau]$. We can check that
$$\varphi_{\overline{n}}\in\Co ([0,\tau],\R), \quad \varphi_{\overline{n}}(0)=0, \quad \varphi_{\overline{n}} \geq 0 \ \textnormal{ on } \ [0,\tau],\quad \varphi_{\overline{n}} \in \Co ^1((0,\tau],\R),$$
then some computations lead to
\begin{eqnarray*}
\hspace{-1.5cm}
\begin{array}{lll}
&\varphi_{\overline{n}}'(t)\\ 
=&\displaystyle \int_0^H G(\overline{n}(t,h)-n_H)\frac{\partial \overline{n}}{\partial t}(t,h)dh \\
=&\displaystyle \int_0^H G (\overline{n}(t,h)-n_H)\left(\frac{\partial^2 \overline{n}}{\partial h^2}(t,h)\right. +\left.L_h(p)(t,h)\left(\frac{n_H-\overline{n}(t,h)}{1+\chi(n(t,h)+n_H)}\right)\right)dh \\
=&-\displaystyle\int_0^H G'(\overline{n}(t,h)-n_H)\left|\frac{\partial \overline{n}}{\partial h}(t,h)\right|^2 dh \\
& + \ \displaystyle \int_0^H G(\overline{n}(t,h)-n_H) L_h(p)(t,h)\left(\frac{n_H-\overline{n}(t,h)}{1+\chi(n(t,h)+n_H)}\right) dh \\
\leq& 0
\end{array}
\end{eqnarray*}
since
$$G(\overline{n}(t,H)-n_H)=0, \qquad 1+\chi(n(t,h)+n_H)\geq 1/2, \qquad p(t,h)\geq 0,$$
for every $t\in(0,\tau]$ and a.e. $h\in[0,H]$. Thus we have
$$\overline{n}(t,h)\leq n_H, \quad \forall t\in[0,\tau], \quad \textnormal{a.e. } h\in[0,H].$$
Consequently (\ref{Eq:Bound_n}) holds. Now suppose that
$$(n_0,p_0,z_0)\in \X^\infty_{n_H}.$$
Since $D(\A)\cap \X^\infty_{n_H}$ is dense into $\X^\infty_{n_H}$, there exists a sequence $(n^k_0,p^k_0,z^k_0)_{k\geq 0}\in D(\A)\cap \X^\infty_{n_H}$ such that
$$\lim_{k\to \infty}\|(n_0,p_0,z_0)-(n^k_0,p^k_0,z^k_0)\|_{\X^\infty}=0.$$
For every $k\geq 0$, there exists a unique solution $(n^k,p^k,z^k)\in \Co([0,\tau],\X^\infty_{n_H})$ for the system (\ref{sistema3}) with initial condition $(n^k_0,p^k_0,z^k_0)$. Using (\ref{Eq:G}), for every $k\geq 0$, we get
\begin{eqnarray*}
\hspace{-2.5cm}
\begin{array}{lll}
&&\left(\begin{array}{cc}
n(t,\cdot) \\
p(t,\cdot) \\
z(t)
\end{array}\right)-\left(\begin{array}{cc}
n^k(t,\cdot) \\
p^k(t,\cdot) \\
z^k(t)
\end{array}\right)
\\
&=&T_{\A_m}(t)\left(\begin{array}{cc}
n_0-n^k_0 \\
p_0-p^k_0 \\
z_0-z^k_0
\end{array}\right)+\displaystyle \int_0^t T_{\A_m}(t-s)\left[f_m\left(\begin{array}{cc}
n(s,\cdot) \\
p(s,\cdot) \\
z(s)
\end{array}\right)-f_m\left(\begin{array}{cc}
n^k(s,\cdot) \\
p^k(s,\cdot) \\
z^k(s)
\end{array}\right)\right]ds
\end{array}
\end{eqnarray*}
for every $t\in [0,\tau]$, so
\begin{eqnarray*}
\hspace{-2.5cm}
\begin{array}{lll}
&&\left\|\left(\begin{array}{cc}
n(t,\cdot) \\
p(t,\cdot) \\
z(t)
\end{array}\right)-\left(\begin{array}{cc}
n^k(t,\cdot) \\
p^k(t,\cdot) \\
z^k(t)
\end{array}\right)\right\|_{\X^\infty} \vspace{0.1cm} \\
&\leq& \left\|\left(\begin{array}{cc}
n_0-n^k_0 \\
p_0-p^k_0 \\
z_0-z^k_0
\end{array}\right)\right\|_{\X^\infty}+\displaystyle \int_0^t (k_m+\lambda_m) \left\|\left(\begin{array}{cc}
n(s,\cdot) \\
p(s,\cdot) \\
z(s)
\end{array}\right)-\left(\begin{array}{cc}
n^k(s,\cdot) \\
p^k(s,\cdot) \\
z^k(s)
\end{array}\right)\right\|_{\X^\infty}ds \vspace{0.1cm} \\
&\leq& \left\|\left(\begin{array}{cc}
n_0-n^k_0 \\
p_0-p^k_0 \\
z_0-z^k_0
\end{array}\right)\right\|_{\X^\infty}
+\tau(k_m+\lambda_m) \max_{s\in[0,\tau]}
\left\|\left(\begin{array}{cc}
n(s,\cdot) \\
p(s,\cdot) \\
z(s)
\end{array}\right)-\left(\begin{array}{cc}
n^k(s,\cdot) \\
p^k(s,\cdot) \\
z^k(s)
\end{array}\right)\right\|_{\X^\infty}
\end{array}
\end{eqnarray*}
for every $t\in[0,\tau]$, since $((n,p,z),(n^k,p^k,z^k))\in \left(\X^\infty_{n_H+(2\chi)^{-1}}\cap B^m\right)^2$ and using (\ref{Eq:Sg_Bound}). Thus, we have
\begin{eqnarray*}
\max_{t\in[0,\tau]}
\left\|\left(\begin{array}{cc}
n(t,\cdot) \\
p(t,\cdot) \\
z(t)
\end{array}\right)-\left(\begin{array}{cc}
n^k(t,\cdot) \\
p^k(t,\cdot) \\
z^k(t)
\end{array}\right)\right\|_{\X^\infty}\\ \leq \left\|\left(\begin{array}{cc}
n_0-n^k_0 \\
p_0-p^k_0 \\
z_0-z^k_0
\end{array}\right)\right\|_{\X^\infty}
+\left(\frac{1}{2}\right)\max_{t\in[0,\tau]}
\left\|\left(\begin{array}{cc}
n(t,\cdot) \\
p(t,\cdot) \\
z(t)
\end{array}\right)-\left(\begin{array}{cc}
n^k(t,\cdot) \\
p^k(t,\cdot) \\
z^k(t)
\end{array}\right)\right\|_{\X^\infty}\end{eqnarray*}
whence
\begin{eqnarray*}
\max_{t\in[0,\tau]}
\left\|\left(\begin{array}{cc}
n(t,\cdot) \\
p(t,\cdot) \\
z(t)
\end{array}\right)-\left(\begin{array}{cc}
n^k(t,\cdot) \\
p^k(t,\cdot) \\
z^k(t)
\end{array}\right)\right\|_{\X^\infty}&\leq& 2\left\|\left(\begin{array}{cc}
n_0-n^k_0 \\
p_0-p^k_0 \\
z_0-z^k_0
\end{array}\right)\right\|_{\X^\infty}\to 0
\end{eqnarray*}
as $k$ goes to infinity, for every $t\in[0,\tau]$. Consequently (\ref{Eq:Bound_n}) holds and we have
$$(n,p,z)\in\Co\left([0,\tau],\X^\infty_{n_H}\cap B_m\right).$$
Some standard time extending properties of the solution allow to extend the solution $(n,p,z)$ over a maximal interval $[0,t_{\max})$.
\end{proof}

\subsection{Global existence and boundedness}

We now prove that the solution of (\ref{sistema3}) is global in time and that $n$ is bounded. We also give an example where $p$ and $z$ are bounded and go to extinction. We then deduce the result for (\ref{sistema1}).

\begin{theorem}
Suppose that operator $L_h$ has one of the shapes given in (\ref{Lh1}) or in (\ref{Lh2}). Then for every initial condition $(n_0,p_0,z_0)\in \X^\infty_{n_H}$, there exists a unique solution $(n,p,z)\in \Co \left([0,\infty),\X^\infty_{n_H}\right)$ for the system (\ref{sistema3}), that satisfies
\begin{eqnarray*}
n(t,h)\leq \max\{0,\sup_{h\in[0,H]}n_0(h)\}
\end{eqnarray*}
for every $t\geq 0$ and $h\in [0,H]$. Moreover, if
\begin{equation}\label{Hyp:m_p}
m_p>\frac{r}{\chi}
\end{equation}
holds true, then
\begin{equation*}
\lim_{t\to \infty}\|p(t,\cdot)\|_{L^\infty(0,H)}=0, \qquad \lim_{t\to \infty}z(t)=0.
\end{equation*}
\end{theorem}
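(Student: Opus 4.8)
The plan is to bootstrap the local solution furnished by Theorem~\ref{Thm:Exist_Loc} into a global one by establishing a priori $L^\infty$ bounds on each component that stay finite on every bounded time interval, thereby excluding finite-time blow-up; the stated bound on $n$ comes out as part of this argument, and the extinction statement is then obtained by sharpening the bound on $p$ under hypothesis (\ref{Hyp:m_p}).

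First I would prove the bound on $n$. Since the solution lives in $\X^\infty_{n_H}$, we have $p\geq 0$ and $n+n_H\geq 0$, so $L_h(p)\geq 0$ and the reaction term $L_h(p)\frac{n+n_H}{1+\chi(n+n_H)}$ is nonnegative; hence $n$ is a subsolution of the heat equation with the Neumann condition at $h=0$ and $n(t,H)=0$. Reusing the Stampacchia truncation of Theorem~\ref{Thm:Generation} — testing with $G\bigl(n(t,\cdot)-\max\{0,\sup_{h}n_0(h)\}\bigr)$ and forming the associated functional via (\ref{Eq:Kappa}) — one gets its derivative $\leq 0$ exactly as for $\varphi_3$ there, whence $n(t,h)\leq \max\{0,\sup_{h\in[0,H]}n_0(h)\}$. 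Together with the lower bound $n\geq -n_H$ this bounds $n$ in $L^\infty$ uniformly in $t$.

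Next comes the decisive estimate on $p$, via the substitution $q(t,h):=e^{(m_p-r/\chi)t}p(t,h)$. Using $\frac{n+n_H}{1+\chi(n+n_H)}\leq \frac{1}{\chi}$, the inequality $L_h(p)\leq rp$ (valid for both (\ref{Lh1}) and (\ref{Lh2}), whose exponential factors are $\leq 1$), and $zg(p)\geq 0$, a direct computation shows $\partial_t q\leq \partial_h^2 q$ with homogeneous Neumann conditions and $q(0,\cdot)=p_0$. The same truncation argument then gives $q(t,h)\leq \|p_0\|_{L^\infty(0,H)}$, that is
$$0\leq p(t,h)\leq \|p_0\|_{L^\infty(0,H)}\,e^{(r/\chi-m_p)t}, \qquad \forall t\geq 0, \ \textnormal{a.e. } h\in[0,H].$$
For $z$, Assumption~\ref{Assump} yields $g(p)\leq \lambda p$, so $\frac{k}{H}\int_0^H g(p)\,dh\leq k\lambda\|p(t)\|_{L^\infty(0,H)}$, which is finite on each $[0,T]$; a Gr\"onwall argument then bounds $z$ on $[0,T]$. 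Since $n$, $p$ and $z$ are thus bounded in $\X^\infty$ on every bounded interval (and $n\geq -n_H$ keeps the solution away from the singularity, so $f_i$ stays well defined), the blow-up alternative forces $t_{\max}=\infty$, producing the global solution.

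Finally, under (\ref{Hyp:m_p}) the exponent $r/\chi-m_p$ is negative, so the estimate on $p$ gives $\|p(t,\cdot)\|_{L^\infty(0,H)}\to 0$. Consequently $\frac{k}{H}\int_0^H g(p)\,dh\leq k\lambda\|p(t)\|_{L^\infty(0,H)}\to 0$, so there is $t_0\geq 0$ with $\frac{k}{H}\int_0^H g(p)(t,h)\,dh-m\leq -m/2$ for all $t\geq t_0$; the third equation of (\ref{sistema3}), together with $z\geq 0$, then yields $z(t)\leq z(t_0)e^{-\frac{m}{2}(t-t_0)}\to 0$. The main obstacle here is rigor rather than ideas: the pointwise comparison manipulations above are licit only for classical solutions, so, exactly as in the proof of Theorem~\ref{Thm:Exist_Loc}, I would first carry out the truncation estimates for initial data in $D(\A)\cap\X^\infty_{n_H}$ (where \cite{Pazy83} guarantees classical solutions) and then recover the general case by density and the continuous dependence already established. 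Translating back through $n=\tilde n+n_H$ transfers all conclusions to the original system (\ref{sistema1}).
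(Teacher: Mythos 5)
Your proposal is correct, and while it shares the paper's overall skeleton (reduction by density to classical solutions with data in $D(\A)\cap\X^\infty_{n_H}$, Stampacchia truncation for the bound on $n$, and the blow-up alternative from \cite{Pazy83} for globality), it takes a genuinely different and in fact tighter route at the decisive estimate on $p$. The paper only integrates the differential inequality $\partial_t p\leq \partial_h^2 p+(r/\chi-m_p)p$ over $[0,H]$, obtaining $\frac{d}{dt}\int_0^H p\,dh\leq (r/\chi-m_p)\int_0^H p\,dh$ and hence decay of the $L^1$-norm under (\ref{Hyp:m_p}); it then passes to $\lim_{t\to\infty}\|p(t,\cdot)\|_{L^\infty(0,H)}=0$ by invoking $p(t,\cdot)\in H^2(0,H)\subset \Co^1([0,H])$ — a step left terse, since upgrading $L^1$ decay to $L^\infty$ decay along this line really requires a uniform-in-time control of $\|p(t,\cdot)\|_{H^2}$ (or at least of the $\Co^1$-norm) that the paper does not make explicit. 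Your substitution $q=e^{(m_p-r/\chi)t}p$, combined with the same truncation machinery, yields the pointwise bound $p(t,h)\leq\|p_0\|_{L^\infty(0,H)}e^{(r/\chi-m_p)t}$ directly, which simultaneously (i) furnishes the finite-interval $L^\infty$ bound needed to rule out blow-up of $p$ (where the paper appeals to (\ref{Eq:Conv_p}) more qualitatively), and (ii) gives exponential $L^\infty$ decay under (\ref{Hyp:m_p}) with an explicit rate, bypassing the $L^1$-to-$L^\infty$ upgrade entirely; your treatment of $z$ is likewise a quantitative version of the paper's use of (\ref{Eq:Conv_z}), producing the rate $e^{-m(t-t_0)/2}$. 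All the ingredients you use are justified within the paper's framework ($g(p)\leq\lambda p$ from Assumption \ref{Assump}, $L_h(p)\leq rp$ for both (\ref{Lh1}) and (\ref{Lh2}), $\frac{n+n_H}{1+\chi(n+n_H)}\leq 1/\chi$ from positivity, and the density argument of Theorem \ref{Thm:Exist_Loc} to legitimize the classical-solution manipulations), so your argument is not merely an alternative: it repairs the one point where the paper's own proof is weakest.
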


\begin{proof}
Let $(n_0,p_0,z_0)\in \X^\infty_{n_H}$ and $(n,p,z)\in \Co \left([0,t_{\max}),\X^\infty_{n_H}\right)$ be
the solution of (\ref{sistema3}). Using the same argument of density as in the proof of Theorem \ref{Thm:Exist_Loc}, we only need to consider the case where the initial condition satisfies (\ref{Hyp:CI}). Because of the positivity of the solution, we have
$$\frac{\partial n}{\partial t}(t,h)\leq  \frac{\partial^2 n}{\partial h^2}(t,h).$$
We define the function
$$\varphi_n(t)=\int_0^H \kappa(n(t,h)-\KK^{n_0})dh.$$
We can show that
$$\varphi_n\in\Co ([0,t_{\max}),\R), \ \ \varphi_n(0)=0, \ \ \varphi_n \geq 0 \ \textnormal{ on } \ [0,t_{\max}), \ \ \varphi_n \in \Co ^1((0,t_{\max}),\R),$$
and 
\begin{eqnarray*}
\varphi_n'(t)&=&\displaystyle \int_0^H G(n(t,h)-\KK^{n_0})\frac{\partial n}{\partial t}(t,h)dh \\
&\leq& -\displaystyle \int_0^H G'(n(t,h)-\KK^{n_0})\left|\frac{\partial n}{\partial h}(t,h)\right|^2 dh\leq 0, \quad \forall t>0
\end{eqnarray*}
so
$$n(t,h)\leq \KK^{n_0}, \quad \forall t\geq 0, \quad \textnormal{a.e. } h\in[0,H].$$
To prove that the solution is global, suppose by contradiction that $t_{\max}<\infty$. Since $n$ is bounded, classical results (see \textit{e.g.} \cite{Pazy83}, Theorem 6.1.4, p. 185) imply that, either
$$\lim_{t\to t_{\max}} \|p(t,\cdot)\|_{L^\infty(0,H)}=\infty$$
or
$$\lim_{t\to t_{\max}}z(t)=\infty.$$
However, the former cannot hold since
\begin{equation}\label{Eq:Conv_p}
\frac{\partial p}{\partial t}(t,h)\leq \frac{\partial^2 p}{\partial h^2}(t,h)+\left(\frac{r}{\chi}-m_p\right)p(t,h), \ \forall t>0, \ \textnormal{a.e. } h\in[0,H]
\end{equation}
and the latter contradicts the fact that
\begin{equation}\label{Eq:Conv_z}
z'(t)\leq z(t)\left(\frac{k}{H}\int_0^H g(p)(t,h)dh-m\right), \quad \forall t>0.
\end{equation}
Consequently $t_{\max}=\infty$ and the solution is global in time. Suppose now that (\ref{Hyp:m_p}) holds and consider an initial consider that satisfies (\ref{Hyp:CI}). Since the solution is classical, we get the inequality (\ref{Eq:Conv_p}). An integration leads to
$$\frac{d}{dt}\int_0^H p(t,h)dh\leq \left(\frac{r}{\chi}-m_p\right)\int_0^H p(t,h)dh,$$
whence 
$$\lim_{t\to \infty}\int_0^H p(t,h)dh=0$$
by assumption (\ref{Hyp:m_p}) and 
$$\lim_{t\to \infty}z(t)=0$$
using (\ref{Eq:Conv_z}). Since $p(t,\cdot)\in H^2(0,H)\subset \Co^1([0,H])$ for every $t>0$, then
$$\lim_{t\to \infty}\|p(t,\cdot)\|_{L^\infty(0,H)}=0$$
which concludes the proof. 
\end{proof}

Using the change of variable (\ref{Eq:Change_Var}), we deduce the same result for the initial problem.

\begin{corollary}
Suppose that operator $L_h$ has one of the shapes given in (\ref{Lh1}) or in (\ref{Lh2}). Then for every initial condition $(n_0,p_0,z_0)\in \X^\infty_+$, there exists a unique solution $(n,p,z)\in \Co \left([0,\infty),\X^\infty_+\right)$ for the system (\ref{sistema1}), that satisfies
\begin{eqnarray*}
n(t,h)\leq \max\{n_H,\|n_0\|_{L^\infty}\}
\end{eqnarray*}
for every $t\geq 0$ and a.e. $h\in[0,H]$. Moreover, if (\ref{Hyp:m_p}) holds, then
\begin{eqnarray*}
\lim_{t\to \infty}\|p(t,\cdot)\|_{L^\infty(0,H)}=0, \qquad \lim_{t\to \infty}z(t)=0.
\end{eqnarray*} 
\end{corollary}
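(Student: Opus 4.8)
The plan is to reduce everything to the previous global-existence theorem by means of the translation $\tilde{n}=n-n_H$ introduced in (\ref{Eq:Change_Var}), exploiting that this change of variable is precisely the one under which (\ref{sistema1}) was transformed into (\ref{sistema3}). First I would start from initial data $(n_0,p_0,z_0)\in\X^\infty_+$ for (\ref{sistema1}) and set $\tilde{n}_0:=n_0-n_H$. By the very definition of the spaces $\X^\infty_\ep$, the condition $(\tilde n_0+n_H\mathbf{1}_{[0,H]},p_0,z_0)=(n_0,p_0,z_0)\in\X^\infty_+$ means exactly that $(\tilde n_0,p_0,z_0)\in\X^\infty_{n_H}$. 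I can therefore apply the preceding theorem to obtain a unique global solution $(\tilde n,p,z)\in\Co([0,\infty),\X^\infty_{n_H})$ of (\ref{sistema3}) issued from $(\tilde n_0,p_0,z_0)$.

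Next I would translate back by defining $n:=\tilde n+n_H$. Since membership in $\X^\infty_{n_H}$ is equivalent to $\tilde n(t,\cdot)\geq -n_H$ a.e., we get $n(t,\cdot)\geq 0$ a.e., i.e. $(n,p,z)(t)\in\X^\infty_+$ for every $t\geq 0$, and continuity in $\X^\infty_+$ follows from that of $(\tilde n,p,z)$ in $\X^\infty_{n_H}$. That $(n,p,z)$ solves (\ref{sistema1}) is immediate because (\ref{sistema3}) is by construction the image of (\ref{sistema1}) under $\tilde n=n-n_H$; uniqueness transfers in the same way, since any solution of (\ref{sistema1}) in $\X^\infty_+$ would yield, after subtracting $n_H$, a solution of (\ref{sistema3}) in $\X^\infty_{n_H}$, which is unique.

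For the upper bound I would use the estimate $\tilde n(t,h)\leq\max\{0,\sup_{h\in[0,H]}\tilde n_0(h)\}$ supplied by the previous theorem and add $n_H$ to both sides, giving
$$
n(t,h)\leq n_H+\max\Bigl\{0,\sup_{h\in[0,H]}(n_0(h)-n_H)\Bigr\}=\max\Bigl\{n_H,\sup_{h\in[0,H]}n_0(h)\Bigr\}.
$$
Since $n_0\geq 0$ a.e., the essential supremum of $n_0$ equals $\|n_0\|_{L^\infty(0,H)}$, which yields the claimed bound $\max\{n_H,\|n_0\|_{L^\infty}\}$. Finally, because the variables $p$ and $z$ are left unchanged by the translation, the limits $\lim_{t\to\infty}\|p(t,\cdot)\|_{L^\infty(0,H)}=0$ and $\lim_{t\to\infty}z(t)=0$ under hypothesis (\ref{Hyp:m_p}) carry over verbatim from the theorem.

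The computations here are entirely routine; the only point demanding a little care is the identification of the translated upper bound, namely checking that adding $n_H$ turns $\max\{0,\sup(n_0-n_H)\}$ into $\max\{n_H,\|n_0\|_{L^\infty}\}$, and confirming that the positivity cone $\X^\infty_+$ for (\ref{sistema1}) corresponds exactly to $\X^\infty_{n_H}$ for (\ref{sistema3}) under the shift.
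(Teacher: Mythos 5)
Your proposal is correct and follows exactly the route the paper intends: the paper deduces the corollary from the global-existence theorem for (\ref{sistema3}) via the change of variable (\ref{Eq:Change_Var}), which is precisely your argument, only spelled out in more detail (the equivalence $\X^\infty_+\leftrightarrow\X^\infty_{n_H}$ under the shift, and the identity $n_H+\max\{0,\sup(n_0-n_H)\}=\max\{n_H,\|n_0\|_{L^\infty}\}$ using $n_0\geq 0$). No gaps; your careful verification of the translated upper bound is exactly the one computation the paper leaves implicit.
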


\section{Open questions and perspectives}

The well-posedness, positivity and asymptotic results that we proved in this article have wide range of applicability to reaction-diffusion model of plankton communities since the functional response $g$ covers several types of predation, such as Holling types I, II, III as well as Ivlev.  

The asymptotic results of extinction are obtained under a threshold condition related to phytoplankton population, stating that the mortality rate is bigger than the maximum growth rate. 

The case where this threshold condition (\ref{Hyp:m_p}) is not satisfied is an open question that will be investigated in a future work.

Another research direction concerns existence of steady states. The trivial equilibrium $(n_H \mathbf{1}_{[0,H]},0,0)\in \X$ clearly always exist. However the existence of non trivial steady states need deeper analysis. In \cite{AMV}, the authors proved numerically the existence of such non trivial equilibria for a slightly different model than the one presented in this paper.

Finally, re-cycling of the nutrient is contemplated in the boundary condition on function $n$ as a constant inflow of nutrient at position $H$. It could also be alternatively considered as a flux in the $n$-equation, but this would lead to different cases of studies in terms of modelling as well mathematical analysis.

\bigskip

\textbf{Acknowledgement}
This research was undertaken within the framework of the Epimath project, funded by Region Bourgogne Franche-Comt\'e.

\section*{References}

\bibliography{bib}

\end{document}